\documentclass{amsart}[12pt]

\newcommand{\hn}[1]{\h^{#1}}

\renewcommand{\S}{\Sigma}
\newcommand{\norma}[1]{\Vert #1 \Vert}
\newcommand{\ol}{\overline}

\newcommand{\cS}{\mathcal{S}}
\newcommand{\cQ}{\mathcal{Q}}

\newcommand{\es}{\emptyset}
\usepackage{enumerate,latexsym}
\usepackage{amsmath,amsthm,amsfonts,amssymb}
\usepackage[hidelinks]{hyperref}
\newcommand{\DOI}[1]{\href{https://doi.org/#1}{DOI:#1}}

\usepackage{soul}
\usepackage{ctable}
\usepackage{yfonts}
\usepackage{amssymb}
\usepackage{amsthm}
\usepackage{array}
\usepackage{booktabs}
\usepackage{hhline}
\usepackage{xy}
\usepackage{epsfig}
\usepackage{color}
\usepackage{upgreek}
\usepackage[english]{babel}
\usepackage{epigraph}
\usepackage{fancybox}
\usepackage{shadow}
\usepackage{afterpage}
\usepackage{mathrsfs}
\usepackage{enumitem}
\usepackage{tabularx}
\usepackage{subcaption}
\usepackage{graphicx}
\usepackage{type1cm}
\usepackage{eso-pic}
\usepackage{color}
\usepackage{upgreek}
\usepackage{verbatim}

\newtheorem{theorem}{Theorem}[section]

\newtheorem{lemma}[theorem]{Lemma}
\newtheorem{claim}[theorem]{Claim}
\theoremstyle{definition}
\newtheorem{definition}[theorem]{Definition}
\newtheorem{remark}[theorem]{Remark}
\newtheorem{example}[theorem]{Example}

\theoremstyle{plain}

\newcommand{\vt}{\vspace{.1cm}}

\newcommand{\R}{\mathbb{R} }

\newcommand{\N}{\mathbb{N} }
\newcommand{\h}{\mathbb{H} }

\newcommand{\s}{\mathbb{S}}

\renewcommand{\rho}{\varrho}
\newcommand{\abs}[1]{\vert #1\vert}
\renewcommand{\Theta}{\varTheta}
\renewcommand{\Lambda}{\varLambda}
\renewcommand{\Omega}{\varOmega}
\renewcommand{\Sigma}{\varSigma}
\renewcommand{\tau}{\uptau}
\usepackage{amsmath}% http://ctan.org/pkg/amsmath

\newcommand{\overbar}[1]{\mkern 1.5mu\overline{\mkern-1.5mu#1\mkern-1.5mu}\mkern 1.5mu}

\newcommand{\forma}[1]{\langle #1\rangle}

\makeatletter
\newcommand{\tpitchfork}{%
 \vbox{
 \baselineskip\z@skip
 \lineskip-.52ex
 \lineskiplimit\maxdimen
 \m@th
 \ialign{##\crcr\hidewidth\smash{$-$}\hidewidth\crcr$\pitchfork$\crcr}
 }%
}
\makeatother

\begin{document}

\title[Dynamical Stability of Translating Solitons]{Dynamical Stability of Translating
Solitons to Mean Curvature Flow in Hyperbolic Space}
\author[R.F. de Lima \and A.K. Ramos]{Ronaldo F. de Lima \and Álvaro K. Ramos}
\address[A1]{Departamento de Matem\'atica - UFRN}
\email{ronaldo.freire@ufrn.br}
\address[A2]{Departamento de Matemática Pura e Aplicada - UFRGS}
\email{alvaro.ramos@ufrgs.br}
\subjclass[2020]{53E10 (primary), 35K93 (secondary).}
\keywords{Mean curvature flow -- Translating Solitons -- Dynamical Stability.}

\maketitle

\begin{abstract}
We develop the theory of translating solitons for the Mean
Curvature Flow (MCF) in hyperbolic space of dimension $n+1\ge 3$.
More specifically, we establish that horospheres
are dynamically stable as radial graphical solutions to MCF.
To that end, we construct rotationally invariant
translators analogous to the winglike solitons introduced by
Clutterbuck, Schnürer and Schulze, which serve as barriers in an
argument based on White’s avoidance principle and the
strong maximum principle for parabolic PDEs.

\end{abstract}

\section{Introduction}

A \emph{mean curvature flow} (MCF) in a Riemannian manifold $\overbar M$ is a one-parameter family
$\{
F_t\colon M\to\overbar M
\}_{t\in[0,T)}$
of immersions satisfying the evolution equation
\begin{equation} \label{eq:mcf}
\frac{\partial F}{\partial t}={\rm\bf H}_t,
\end{equation}
where ${\rm\bf H}_t$ is the mean curvature vector of $f_t$ with the metric induced by $\overbar M$.
In this context, we say that $F_t$ is a \emph{solution} to MCF.

A distinguished class of solutions to MCF is that of translating solitons.
Such solutions are generated by the Killing field defined by a one-parameter
group of translations along a geodesic in the ambient manifold $\overbar M$. The initial condition of
a translating soliton is then called a \emph{translator}.
Translating solitons in Riemannian manifolds constitute a main topic in the general theory
of extrinsic geometric flows (which includes MCF), and they have been studied from many points of view,
including construction, classification, asymptotic behaviour, stability and so on;
see, e.g.,~\cite{BL,schulze,DS, dLP,dLRS02,dLRS, EH, G, gs,H,I, LM, P1,P2,SS} and the references therein.

In Euclidean space, translators appear
as parabolic rescalings of type II singularities of certain solutions to mean curvature flow (cf.~\cite{HS}).
The best known are the cylinder over the
graph of the function $f(t) = -\log(\cos t),$ $t\in(-\pi/2,\pi/2)$, called the \emph{grim reaper};
the rotational entire graph over $\R^2$
obtained by Altschuler and Wu~\cite{altschuler}, known as the
\emph{bowl soliton} or \emph{translating paraboloid}, and the one-parameter family of rotational
annuli obtained by Clutterbuck, Schnürer and Schulze~\cite{schulze},
the so called \emph{winglike solutions} or \emph{translating catenoids}.

Another special type of solution to MCF in $\R^{n+1}=\R^n\times\R$, called \emph{graphical}, is the one in which
the images of the immersions are graphs over a fixed open set $U\subset\R^n\times\{0\}$. A translating soliton whose initial condition
is a graph, such as the grim reaper and the translating paraboloid, is a particular example of a graphical solution.
In~\cite{schulze}, the authors used their translating catenoids as good barriers to show that the translating paraboloid $\Sigma_0$
is dynamically stable in the sense that
any   perturbation of $\Sigma_0$ with $C^0$ decay is the initial condition of a graphical solution defined
for all $t>0$, which, as $t\to+\infty$, becomes asymptotic to the
translating soliton defined by $\Sigma_0$.

Inspired by the results in~\cite{schulze},
in the present work
we establish that horospheres in $\h^{n+1}$, which can be regarded
as the hyperbolic equivalent to the bowl solitons of $\R^{n+1}$,
are dynamically stable graphical solutions to MCF; see Theorem~\ref{thm-stability}.
For its proof, we first invoke a theorem by Unterberger~\cite{unterberger} which ensures
the existence of a graphical solution to MCF with given initial conditions. Then, we construct rotationally invariant
translating catenoids to be used
as barriers, allowing us to apply White's avoidance
principle~\cite{white} to prove convergence in space. Finally,
following the line of reasoning in~\cite{schulze},
we apply the strong maximum principle for parabolic PDE's
to prove convergence in time.

The paper is organized as follows. In Section~\ref{sec-preliminaries},
we briefly discuss some aspects of mean curvature flow in
hyperbolic space. In Section~\ref{sec-translators},
we construct the rotationally invariant translating solitons
to MCF, extending to the hyperbolic space $\hn{n+1}$,
in any dimension $n+1\geq3$, some of the results in~\cite{dLRS}, originally
established in $\h^{3}$. In Section~\ref{sec-stability},
we introduce the notion of radial graphic solution to MCF in $\h^{n+1}$, and then
prove Theorem~\ref{thm-stability}, concerning the stability
of horospheres in $\h^{n+1}$. Finally, in
Section~\ref{secopenprob}, we bring to the context of $\hn{n+1}$
the family of grim reapers in $\hn3$ constructed in~\cite{dLRS}, and
pose some questions regarding their stability.

\bigskip

\noindent
{\bf Acknowledgements.} We thank Lucas Ambrozio for his hospitality during the summer program at IMPA,
where this work was completed. We are also grateful to Barbara Nelli for useful remarks,
and especially for drawing our attention to Unterberger's paper~\cite{unterberger}.
A. Ramos was partially supported by CNPq - Brazil, grant number 406666/2023-7.

\section{Preliminaries} \label{sec-preliminaries}
Throughout the manuscript, we work with the upper half-space model for
the hyperbolic space $\h^{n+1}$ of dimension $n+1\ge 3$, that is,
$$\h^{n+1}:=(\R_+^{n+1},ds^2),$$
where $\R^{n+1}_+ = \{(x_1,\dots, x_{n+1})\in \R^{n+1}\mid x_{n+1}>0\},$
$ds^2:={d\bar s^2}/{x_{n+1}^2}$ and
$d\bar s^2$ is the standard Euclidean
metric of $\R_+^{n+1}.$ We will also use $\forma{\,,\,}$ to
denote the metric $ds^2$.

Let $\Sigma$ be an oriented hypersurface
of \,$\h^{n+1}$ with
unit normal $N$ and shape operator
$A$, so that
\[
Av=-\ol \nabla_vN, \,\, v\in T\Sigma,
\]
where $\ol \nabla$ is the Levi-Civita connection of $\h^{n+1}$ and
$T\Sigma$ is the tangent bundle of $\Sigma$.
The principal curvatures of $\Sigma,$ that is,
the eigenvalues of $A,$ will be denoted by
$k_1, \dots ,k_n$, and
the mean curvature $H$ of $\Sigma$ is expressed by
\[
H=\frac{k_1+\cdots +k_n}n\cdot
\]
The mean curvature vector of $\Sigma$ is
\begin{equation*}\mathbf{H}=HN,\end{equation*}
which is invariant under the choice of orientation
$N\to -N$ and satisfies $\norma{\mathbf{H}} = \abs{H}$.

Given an oriented hypersurface $\Sigma\subset\R_+^{n+1},$ let
$\overbar N=(\overbar N_1,\dots, \overbar N_{n+1})$
be a unit normal of $\Sigma$ with respect to
the Euclidean metric $d\bar s^2.$
It is easily checked that
\begin{align*}
N(p)=x_{n+1}\overbar N(p), \,\,\, p=(x_1,\dots, x_{n+1})\in\Sigma,
\end{align*}
defines a unit normal of $\Sigma$ with respect to the
hyperbolic metric $ds^2.$
With these choices of orientation, let
$\overbar H$ (resp.~$H$) denote the mean curvature of $\Sigma$ with respect to the Euclidean metric
(resp. the hyperbolic metric) on $\R_+^{n+1}$.
Then, $\overbar H$ and $H$ satisfy the following relation
(cf.~identity (2.1) in~\cite{gs}):
\begin{equation} \label{eq-MCrelation}
H(p)=x_{n+1}\overbar H(p)+\overbar N_{n+1}(p) \,\,\, \forall p=(x_1,\dots, x_{n+1})\in\Sigma.
\end{equation}

\subsection{Translators to mean curvature flow} \label{sec-MCF}
Let, for $t\in\R$,
$\Gamma_t\colon \hn{n+1}\to \hn{n+1}$ be given by
\begin{align}\label{eqtranslhyp}
\Gamma_t(p):= e^tp, \quad p\in\h^{n+1}.
\end{align}
Then, $\{\Gamma_t\mid t\in \R\}$
is a 1-parameter subgroup of isometries of $\h^{n+1}$,
which are called
\emph{hyperbolic translations} (along the
vertical geodesic determined by the $x_{n+1}$ axis of $\h^{n+1}$).
The Killing field generated by $\{\Gamma_t\}$ is simply
$\xi(p) = p$, where we are using the abuse of notation
\begin{equation*}
p=(x_1,\dots, x_{n+1})\in \h^{n+1} \leftrightarrow x_1\partial_{x_1}+ \cdots +x_{n+1}\partial_{x_{n+1}}\in T_p\h^{n+1}.
\end{equation*}

In this context, we say that an oriented hypersurface
$\Sigma\subset\h^{n+1}$ is a \emph{translator}
to MCF if it satisfies
\begin{equation} \label{eq-translatorH301}
H(p)=\langle p,N(p)\rangle \,\,\,\forall p\in\Sigma,
\end{equation}
where $H$ is the mean curvature of $\Sigma$ with respect to
the unit normal $N$.
For an isometric immersion $F\colon M\to\h^{n+1}$
such that
$\Sigma:=F(M)$ is a translator, one can verify that
$\{F_t\colon M \to \hn{n+1}\mid t\in\R\}$, where
\begin{align*}
F_t:=\Gamma_t\circ F,
\end{align*}
satisfies~\eqref{eq:mcf}, and hence is a solution to MCF (cf.~\cite{hungerbuhler-smoczyk}).

\begin{example}\label{eghorosp}
Let $\mathscr H_h$ be the horosphere of $\h^{n+1}$
at height $h>0,$ i.e.,
\[
\mathscr H_h=\{(x_1,\dots ,x_{n+1})\in\h^{n+1}\mid x_{n+1}=h\}.
\]
At any point $p\in\mathscr H_h,$ we have that
$H(p)=1$ and $N(p)=h\partial_{x_{n+1}}.$ Thus
\[
\langle p,N(p)\rangle=\frac1{h^2}h^2=1=H(p) \,\,\ \forall p\in\mathscr H_h,
\]
and hence $\mathscr H_h$ is a translator to MCF in $\h^{n+1}.$
\end{example}

\subsection{Graphical solutions to MCF in $\h^{n+1}$}
Let $\mathcal S\subset\R^{n+1}_+$ be the hyperbolic hyperplane defined as the
upper hemisphere of the unit Euclidean sphere centered at the origin.
For $T>0$, consider $u\in C^{\infty}(\mathcal S\times (0,T))
\cap C^{0}(\mathcal S\times [0,T))$; for $t\in [0,T)$,
$u_t\colon \cS\to \R$ will denote the function $u_t(x) = u(x,t)$.
Then, $u$ defines the following flow of entire
radial graphs over $\mathcal S$:
\begin{equation} \label{eq-map}
F(x,t):=e^{u_t(x)}x, \,\,\,\,\, (x,t)\in\mathcal S\times[0, T).
\end{equation}

The family $F_t:=F(.\,, t)$ is a solution to MCF
in $\h^{n+1}$ if and only if the function $u$ satisfies (see~\cite{LX,unterberger})
\begin{equation} \label{eq-MCFequation}
\frac{\partial u}{\partial t}(x,t)
=x_{n+1}\sqrt{1+\|\nabla u_t(x)\|^2}H(x,t),
\end{equation}
where $\|\cdot\|$ is the Euclidean norm in $\R^{n+1}$,
$\nabla u_t(x)$ is the gradient of $u_t$ at
the point $x=(x_1,\dots,x_{n+1})\in\mathcal S$
in the canonical metric of $\s^n$,
and $H(x,t)$ is the mean curvature of $F_t(\cS)$ at $x$
with respect to the hyperbolic metric, using the upwards pointing orientation.

\begin{definition}
A function $u\in C^{\infty}(\cS\times (0,T))\cap C^{0}(\cS\times [0,T))$
satisfying~\eqref{eq-MCFequation} will be called a \emph{graphical solution}
to MCF in $\h^{n+1}$, and the map $F$ defined in~\eqref{eq-map} will be called a
\emph{graphical} MCF in $\h^{n+1}$.
\end{definition}

Writing $H(x,t)$ in terms of $u$ gives that~\eqref{eq-MCFequation}
can be seen as a parabolic quasi-linear equation~\cite{unterberger}.
More precisely, set
$$\Lambda = \cS\times \R^{n+1}\times \R^{(n+1)\times(n+1)},$$
and denote an element of $\Lambda$ by the coordinates $(x,p,r)$,
where $x\in \cS$, $p\in \R^{n+1}$ and
$r \in \R^{(n+1)\times(n+1)}$.
There exists a function $Q\colon \Lambda\to \R$
defining an elliptic quasi-linear operator
\begin{equation}\label{eqopQ}
\mathcal Q(u) = Q(x,Du_t,D^2u_t)
\end{equation}
such that~\eqref{eq-MCFequation} becomes
\begin{equation} \label{eq-MCFequation02}
\frac{\partial u}{\partial t}(x,t)=\mathcal Q(u).
\end{equation}

See Section~3 of~\cite{unterberger} for the explicit expression
of $Q$ and also for the fact that~\eqref{eq-MCFequation02} is
a quasi-linear (nonuniform) parabolic PDE. From this
fact, Unterberger derived the following longtime existence result
for graphical MCF in $\h^{n+1}$.

\begin{theorem}{\cite[Theorem 0.1]{unterberger}} \label{thm-unterberger}
For any locally Lipschitz continuous function $u_0$ on $\mathcal S$,
there exists a graphical solution $u=u(x,t)$ to {\rm MCF} in $\h^{n+1}$ with initial condition $u_0$, which
is defined for all $t\ge 0$.
\end{theorem}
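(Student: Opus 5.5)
This is Unterberger's theorem, which the paper cites rather than proves. Still, here is how I would prove it. The plan is to work on the full hemisphere $\cS$, which is noncompact in the induced hyperbolic metric and so is the whole hyperbolic hyperplane. I would obtain the solution as a limit of solutions of Dirichlet problems on an exhaustion by compact domains. Interior estimates, independent of the domain, would then give existence for all $t\ge0$.

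First, mollify $u_0$ to smooth functions $u_0^k$ that converge locally uniformly and have locally uniform Lipschitz bounds. Fix an exhaustion of $\cS$ by geodesic balls $B_R$ centred at the pole $(0,\dots,0,1)$. On each $B_R$, solve \eqref{eq-MCFequation02} with initial data $u_0^k$ and boundary data $u_0^k$ on $\partial B_R$. On a compact domain the equation is uniformly parabolic, since $x_{n+1}$ is bounded below there. Standard quasilinear theory (Ladyzhenskaya--Ural'tseva, Lieberman) then gives a solution for all time, once we have a priori $C^0$ and gradient bounds.

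The $C^0$ bounds come from the comparison principle, using radial graphs of translated horospheres, equidistant hypersurfaces to $\cS$, and totally geodesic hemispheres as barriers. Their evolutions under MCF are explicit, so they bound $u$ on compact time intervals.

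The main obstacle is a gradient estimate that is \emph{local}, i.e.\ independent of $R$ and $k$ on compact subsets of $\cS\times[0,T]$. It must also depend only on the local Lipschitz constant of $u_0$. I would derive it geometrically. Write the MCF of the graph and consider the function $v=\langle \nu,\xi\rangle^{-1}$, where $\xi$ is the Killing field $p$ generating the dilations \eqref{eqtranslhyp}, which are transverse to the radial graphs. In the ambient geometry $v$ plays the role of the gradient function $\sqrt{1+\|\nabla u\|^2}$ up to a factor of $x_{n+1}$. It satisfies an evolution inequality of Ecker--Huisken type,
\begin{equation*}
\Big(\frac{d}{dt}-\Delta\Big)v\le -|A|^2v-\frac{2}{v}|\nabla v|^2+c\,v .
\end{equation*}
The extra term $c\,v$ comes from the curvature of $\h^{n+1}$ and from the fact that $\xi$ is Killing. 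Multiplying by a space-time cutoff $\varphi$ built from the hyperbolic distance, with $(d/dt-\Delta)\varphi$ controlled, and applying the maximum principle to $\varphi v$, gives the Ecker--Huisken interior estimate $v\le C(\sup_{B}v(\cdot,0),T,\text{radius})$.

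Once the gradient is bounded locally, the equation is locally uniformly parabolic. Krylov--Safonov or De Giorgi--Nash estimates followed by Schauder theory give local $C^{2+\alpha,1+\alpha/2}$ bounds for $t>0$. Near $t=0$ we get H\"older continuity from the Lipschitz data, which yields $C^0$ attainment of $u_0$. Arzel\`a--Ascoli together with a diagonal argument as $R,k\to\infty$ produces $u\in C^\infty(\cS\times(0,\infty))\cap C^0(\cS\times[0,\infty))$ solving \eqref{eq-MCFequation}. Since none of the estimates blow up in finite time, the solution is defined for all $t\ge0$.
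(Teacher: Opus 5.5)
The paper gives no argument for this statement; it simply cites Unterberger. So your outline has to stand on its own. Its architecture is the natural Ecker--Huisken scheme: solve Dirichlet problems on $B_R$, prove local estimates uniform in $R,k$, then pass to a limit by compactness. Your evolution equation for $v$ is correct. It is in fact an equality with $c=n$ for unnormalized $H$, because $\xi$ is Killing and $\ol{\mathrm{Ric}}(\nu,\nu)=-n$. Also $v=x_{n+1}\sqrt{1+\norma{\nabla u}^2}$, so the graphical equation is just $\partial_t u=vH$.

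The genuine gap is that your \emph{zeroth-order} bounds are not local. The theorem allows any locally Lipschitz $u_0$, with arbitrary growth or oscillation as $x\to\partial\cS$ (e.g.\ $u_0=1/x_{n+1}$). Every barrier you list is complete and noncompact: horizontal horospheres $u=\log h+t-\log x_{n+1}$, equidistants to $\cS$, hemispheres $u\equiv c$. To compare one with the solution on $B_R$, it must be ordered against $u_0^k$ on all of $\overline{B}_R$. The resulting bounds therefore involve $\sup_{B_R}\abs{u_0}$ or $\sup_{B_R}(u_0+\log x_{n+1})$, which blow up as $R\to\infty$. These bounds let you solve on $B_R$, but they give no $R$-independent bound on $u^{R,k}(x_0,t)$. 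That bound is needed at every later step:
\begin{itemize}
\item Arzel\`a--Ascoli requires it.
\item The Krylov--Safonov/Schauder step needs the oscillation of $u$ over parabolic cylinders, in $t$ as well as $x$.
\item Your H\"older continuity at $t=0$ needs it, since the barrier argument for the linearized equation requires bounds on the lateral boundary of a small cylinder.
\item If your cutoff is the distance to a point of $\h^{n+1}$, it is needed to know that $F(x,t)$, for $x$ in a fixed compact subset of $\cS$, stays in the ball where the gradient estimate holds.
\end{itemize}
As written, the diagonal limit need not exist, nor attain $u_0$.

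The fix is to use \emph{compact} barriers. Geodesic spheres shrink under MCF and become extinct at a time $T(r_0)\to\infty$ as $r_0\to\infty$. Write $p=e^sx$; the metric is then $g_{\cS}+x_{n+1}^{-2}ds^2$. Hence the projection to $\cS$ is $1$-Lipschitz, and since $x_{n+1}\le1$, the geodesic ball of radius $r_0$ about $e^{s_0}x_0$ lies over $B_{r_0}(x_0)$ with $\abs{s-s_0}\le r_0$.

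Suppose $s_0-r_0>\sup_{B_{r_0}(x_0)}u_0$. Then the avoidance principle keeps the shrinking sphere above every approximating flow, and the sphere never reaches their fixed boundaries over $\partial B_R$. So $u(x_0,t)<s_0$ for $t<T(r_0)$, and symmetrically from below. Small spheres of radius $r\approx\epsilon/(1+L)$, placed just above and below the Lipschitz cone of $u_0$ at $x_0$, give $\abs{u(x_0,t)-u_0(x_0)}<\epsilon$ for $t\le c(n)\epsilon^2/(1+L)^2$. This is exactly the uniform continuity at $t=0$ that you need.

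Alternatively, make the cutoff dilation invariant, using $\cosh\rho=\abs{\xi}$, where $\rho$ is the distance to the $x_{n+1}$-axis; at $e^sx$ this equals $1/x_{n+1}$. With $\eta=\xi/\abs{\xi}$ one has $\ol\nabla^2\cosh\rho=\cosh\rho\,\ol g-(\cosh\rho)^{-1}\eta^\flat\otimes\eta^\flat$. It follows that $\psi=\cosh R_0-e^{nt}\cosh\rho$ satisfies $(\partial_t-\Delta)\psi\le0$ along the unnormalized flow. The Ecker--Huisken argument for $e^{-nt}v\psi_+$ then bounds $v$ on compact subsets of $\cS$ with no $C^0$ input. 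An interior curvature bound $\abs{A}^2\le C(1+t^{-1})$ then gives $\abs{\partial_t u}\le Ct^{-1/2}$, hence $\abs{u-u_0}\le 2C\sqrt t$.

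There are two smaller points. First, solving on $B_R$ needs a gradient bound up to $\partial B_R$. The maximum principle for $v$ only reduces this to a boundary estimate, which needs barriers. These exist because the cone over $\partial B_R$ is the hyperbolic cylinder of radius $R$ about the $x_{n+1}$-axis, with principal curvatures $\coth R$ and $\tanh R$, hence mean convex; you should say so. Second, for this non-divergence quasilinear equation, Krylov--Safonov applied to $u$ only gives H\"older continuity of $u$, and De Giorgi--Nash needs divergence form. The missing step is a H\"older gradient estimate (Ladyzhenskaya--Solonnikov--Ural'tseva, Ch.~VI; Lieberman, Ch.~XII) or a curvature estimate, after which Schauder theory applies.
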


\section{Translating Catenoids in $\h^{n+1}$} \label{sec-translators}

In this section, we generalize the arguments of~\cite{dLRS} and
present a family of complete translators to MCF in $\hn{n+1}$,
to be called the {\em translating catenoids},
analogous to the {\em winglike solitons} of~\cite{schulze}. In
the proof of our main result, the translating catenoids will be used
as barriers to prove the stability of horospheres
as entire translating graphical solutions to MCF.

Choose a positive smooth function $\phi$ on an open interval
$I\subset(0,+\infty)$, and let $\Sigma$ be the hypersurface parameterized by the map
\begin{equation} \label{eq-parameterization}
X(\theta_1,\dots, \theta_{n-1},s)=
(s\varphi(\theta_1,\dots,\theta_{n-1}),\phi(s))\in\R^{n}\times\R_+,
\end{equation}
where $\varphi$ is a local parameterization of the unit sphere $\s^{n-1}\subset\R^n$.
We shall call $\Sigma$ the \emph{vertical rotational graph determined by} $\phi$.

\begin{lemma} \label{lem-rotationalODE01}
A vertical rotational graph determined by a smooth function $\phi$ is
a translator to {\rm MCF} in \,$\h^{n+1}$
if and only if $\phi$ satisfies the second order {\rm ODE}:
\begin{equation} \label{eq-EDOPsi}
\phi''=-\phi'(1+(\phi')^2)\left(\frac{ns}{\phi^2}+\frac{n-1}s\right).
\end{equation}
\end{lemma}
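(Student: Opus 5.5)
We will use the relation \eqref{eq-MCrelation} between the hyperbolic and Euclidean mean curvatures, together with $N = x_{n+1}\overbar N$, to turn the translator equation \eqref{eq-translatorH301} into a condition involving only Euclidean quantities of the hypersurface $\Sigma$ parameterized by \eqref{eq-parameterization}. For $p=(x_1,\dots,x_{n+1})$ we have $\langle p,N\rangle = x_{n+1}^{-2}\,(p\cdot x_{n+1}\overbar N)=(p\cdot\overbar N)/x_{n+1}$, where the dot is the Euclidean inner product. Hence $\Sigma$ is a translator if and only if
\[
x_{n+1}\overbar H+\overbar N_{n+1}=\frac{p\cdot \overbar N}{x_{n+1}},
\]
with $x_{n+1}=\phi(s)$ along $\Sigma$.

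Next we compute the Euclidean data of the rotational graph. The profile curve is $s\mapsto(s,\phi(s))$ in the half-plane spanned by a direction $\varphi\in\s^{n-1}$ and $e_{n+1}$. We take the upward Euclidean unit normal
\[
\overbar N=\frac{(-\phi'\varphi,\,1)}{\sqrt{1+\phi'^2}},
\]
so $\overbar N_{n+1}=(1+\phi'^2)^{-1/2}$ and $p\cdot\overbar N=(\phi-s\phi')(1+\phi'^2)^{-1/2}$. The Euclidean principal curvatures are the standard ones for a surface of revolution:
\begin{itemize}
\item along the profile, $\phi''(1+\phi'^2)^{-3/2}$;
\item $n-1$ times in the rotational directions, $\phi'/(s\sqrt{1+\phi'^2})$.
\end{itemize}
Therefore
\[
n\overbar H=\frac{\phi''}{(1+\phi'^2)^{3/2}}+\frac{(n-1)\phi'}{s(1+\phi'^2)^{1/2}}.
\]
These signs must be checked against the convention $A=-\overline\nabla N$ with upward normal. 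As a consistency test, the horosphere ($\phi$ constant) gives $H=\overbar N_{n+1}=1$, matching Example~\ref{eghorosp}, and a convex-upward bowl should have $\overbar H>0$.

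Substituting, and recalling that \eqref{eq-MCrelation} uses the normalized mean curvature (so $\overbar H$ carries the factor $1/n$), then multiplying through by $\sqrt{1+\phi'^2}/\phi$, the translator condition becomes
\[
\frac{1}{n}\left(\frac{\phi''}{1+\phi'^2}+\frac{(n-1)\phi'}{s}\right)+\frac1\phi=\frac{\phi-s\phi'}{\phi^2}.
\]
The terms $1/\phi$ cancel, leaving
\[
\frac{\phi''}{n(1+\phi'^2)}=-\frac{s\phi'}{\phi^2}-\frac{(n-1)\phi'}{ns}.
\]
Multiplying by $n(1+\phi'^2)$ yields exactly \eqref{eq-EDOPsi}. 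Every step is an equivalence because $\phi>0$, $s>0$ and $1+\phi'^2>0$, which gives both directions of the lemma.

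The only delicate point is bookkeeping rather than a conceptual obstacle. The sign and orientation conventions in \eqref{eq-MCrelation} have to be consistent with the chosen upward normal, and the normalization $H=(k_1+\cdots+k_n)/n$ has to be applied to $\overbar H$ as well. I will fix these by checking the horosphere case and the sign of the rotational curvature for an upward-opening profile before trusting the general computation.
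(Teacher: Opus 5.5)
Your proposal is correct and follows essentially the same route as the paper. It uses the same upward Euclidean normal $\overbar N=\rho(-\phi'\varphi,1)$, the same Euclidean mean curvature, the relation \eqref{eq-MCrelation} to obtain $H$, and the identity $\langle X,N\rangle=\rho(\phi-s\phi')/\phi$; equating and cancelling the $1/\phi$ terms then gives \eqref{eq-EDOPsi}. The sign conventions you flagged as delicate do check out: with $A=-\overline\nabla N$ and upward normals one gets $k_i=x_{n+1}\bar k_i+\overbar N_{n+1}$. The orientation choice is harmless in any case, because the translator equation is invariant under $N\to -N$.
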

\begin{proof}
For a rotational graph $\Sigma$ parameterized by
$X$ as in~\eqref{eq-parameterization}, we have that its
Euclidean unit normal $\overbar N$ is
\[
\overbar N:=\rho(-\phi'\varphi,1),
\quad \rho:=\frac{1}{\sqrt{1+(\phi'))^2}}\cdot
\]

Then, a direct computation gives that, with this orientation,
the Euclidean mean curvature $\overbar H$ is the following function of $s$:
\[
\overbar H=\frac{\rho}{n}\left(\frac{\phi''}{1+(\phi')^2}+\frac{(n-1)\phi'}{s}\right).
\]
Thus, from~\eqref{eq-MCrelation}, the mean curvature $H$ of $\Sigma$ in $\h^{n+1}$ with respect to $N:=\phi\overbar N$ is
\begin{equation} \label{eq-Hrotationaltranslator}
H=\phi\overbar H+\overbar N_{n+1}
=\rho\left(
\frac{\phi}{n}\left(\frac{\phi''}{1+(\phi')^2}+\frac{(n-1)\phi'}{s}\right)+1\right).
\end{equation}
It is also easily seen that the equality
\begin{equation} \label{eq-Xeta}
\langle X,N\rangle=\frac{\rho}{\phi}(\phi-s\phi')
\end{equation}
holds everywhere on $\Sigma.$

From~\eqref{eq-Hrotationaltranslator} and~\eqref{eq-Xeta}, we conclude that
equation~\eqref{eq-translatorH301} for the vertical graph
$\Sigma$ is equivalent to the second order ODE:
\begin{equation*}
\phi''=-\phi'(1+(\phi')^2)\left(\frac{ns}{\phi^2}+\frac{n-1}s\right),
\end{equation*}
which proves the lemma.
\end{proof}

To construct the family of translating catenoids, besides rotational vertical graphs,
we also need to consider \emph{horizontal rotational graphs}. Given a smooth positive function
$d$ on an interval $(1-\delta,1+\delta)\subset (0,+\infty)$, such a graph is a rotational hypersurface
$\Sigma$ defined as
\[
\Sigma:=\{(x_1,\dots,x_{n+1})\in\h^{n+1}\mid x_1^2+\cdots+x_{n}^2=d^2(x_{n+1})\}.
\]

The next lemma characterizes horizontal rotational graphs that are translators.

\begin{lemma} \label{lem-rotationalODE021}
A horizontal rotational graph $\Sigma$ determined by a smooth function $d$ is
a translator to {\rm MCF} in $\h^{n+1}$ if and only if the function $d$
satisfies the {\rm ODE}:
\begin{equation} \label{eq-lemmahorizontalgraph}
d''=(1+(d')^2)\left(\frac{nd}{x_{n+1}^2}+\frac{n-1}{d}\right)\cdot
\end{equation}
In particular, such a solution $d$ is strictly convex.
\end{lemma}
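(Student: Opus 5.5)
We mirror the proof of Lemma~\ref{lem-rotationalODE01}, now parameterizing $\Sigma$ by the height $x_{n+1}$. Writing $y:=x_{n+1}$, we take
\[
X(\theta_1,\dots,\theta_{n-1},y)=(d(y)\varphi(\theta_1,\dots,\theta_{n-1}),\,y),
\]
with $\varphi$ a local parameterization of $\s^{n-1}$. The Euclidean unit normal pointing away from the axis is
\[
\overbar N=\rho\,(\varphi,-d'), \qquad \rho=(1+(d')^2)^{-1/2}.
\]
In particular $\overbar N_{n+1}=-\rho d'$, and the hyperbolic unit normal is $N=y\overbar N$.

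\textbf{Step 1: the Euclidean mean curvature.} This is a routine computation for a hypersurface of revolution. The profile curve $y\mapsto(d(y),y)$ contributes the principal curvature $-d''\rho^3$. The $n-1$ parallel directions each contribute $\rho/d$, with signs depending on the orientation. With the normal pointing outward, this gives
\[
\overbar H=\frac{\rho}{n}\left(\frac{n-1}{d}-\frac{d''}{1+(d')^2}\right),
\]
up to a global sign that will be fixed by consistency.

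\textbf{Step 2: the translator equation.} Relation~\eqref{eq-MCrelation} gives
\[
H=y\overbar H-\rho d'.
\]
Also,
\[
\langle X,N\rangle=\frac{1}{y^2}\,y\,\rho\,(d-yd')=\frac{\rho}{y}(d-yd').
\]
Equating the two as in~\eqref{eq-translatorH301} makes the terms $-\rho d'$ on both sides cancel. What remains is
\[
\frac{y}{n}\left(\frac{n-1}{d}-\frac{d''}{1+(d')^2}\right)=\frac{d}{y}.
\]
Equivalently,
\[
\frac{d''}{1+(d')^2}=\frac{n-1}{d}-\frac{nd}{y^2}.
\]

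\textbf{Main obstacle: the orientation sign.} This sign bookkeeping is the delicate point. The stated equation has both terms positive, so the correct orientation must be the inward one, $N=-y\overbar N$. In that case
\[
H=-y\overbar H+\rho d', \qquad \langle X,N\rangle=-\frac{\rho}{y}(d-yd').
\]
The $\rho d'$ terms cancel again, and we obtain
\[
\frac{y}{n}\left(\frac{d''}{1+(d')^2}-\frac{n-1}{d}\right)=\frac{d}{y}.
\]
This is exactly~\eqref{eq-lemmahorizontalgraph}. I would fix the orientation by requiring agreement with Lemma~\ref{lem-rotationalODE01} on the overlapping pieces where $\phi'\neq0$, since there both descriptions parameterize the same hypersurface. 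Indeed, setting $d=\phi^{-1}$ gives $d'=1/\phi'$ and $d''=-\phi''/(\phi')^3$. Substituting these into~\eqref{eq-EDOPsi} converts it into~\eqref{eq-lemmahorizontalgraph}, which serves as a cross-check of the sign. Because the translator condition~\eqref{eq-translatorH301} is not invariant under $N\to -N$, the orientation choice genuinely matters. The invariant meaning is that the mean curvature vector $\mathbf{H}$ equals the normal projection of the Killing field $\xi$, and the orientation is whichever makes this true.

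\textbf{Step 3: strict convexity.} Since $d>0$ and $x_{n+1}>0$, every factor on the right-hand side of~\eqref{eq-lemmahorizontalgraph} is positive, so $d''>0$. Hence $d$ is strictly convex.
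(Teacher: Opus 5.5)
Your final ODE and Step~3 are correct, but Steps~1--2 reach the ODE only through two compensating sign errors. With the paper's convention $Av=-\ol\nabla_vN$ (applied to the Euclidean metric) and the outward normal $\overbar N=\rho(\varphi,-d')$, the profile curve $\gamma(y)=(d(y),y)$ has principal curvature $\langle\gamma'',\overbar N\rangle/\|\gamma'\|^2=+\rho^3d''$. Along a parallel, the Euclidean derivative of $\overbar N$ is $(\rho/d)\,v$, so each parallel direction contributes $-\rho/d$. Hence
\[
\overbar H=\frac{\rho}{n}\Bigl(\rho^2d''-\frac{n-1}{d}\Bigr),
\]
which is the negative of your formula. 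As a sanity check, for a cylinder ($d$ constant) the outward normal gives $\overbar H=-\frac{n-1}{nd}<0$. Substituting this into your own outward computation, $y\overbar H-\rho d'=\frac{\rho}{y}(d-yd')$, gives \eqref{eq-lemmahorizontalgraph} at once, with no change of orientation. That is exactly the paper's proof. It writes $\Sigma=\Phi^{-1}(0)$ with $\Phi=x_1^2+\cdots+x_n^2-d^2(x_{n+1})$ and takes the outward normal $\nabla\Phi/\|\nabla\Phi\|$. It then equates $H=\frac{\rho x_{n+1}}{n}\bigl(\rho^2d''-\frac{n-1}{d}\bigr)-\rho d'$ with $\langle N,p\rangle=\rho\bigl(\frac{d}{x_{n+1}}-d'\bigr)$. (Its displayed formula for $\overbar H$ has a typo, $n-1$ in place of $(n-1)/d$, but its formula for $H$ is right.)

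Switching orientation cannot repair the error, because \eqref{eq-translatorH301} \emph{is} invariant under $N\to-N$, contrary to your claim. Both $H$ and $\langle p,N\rangle$ change sign; this is just your own remark that the condition means $\mathbf H=\xi^\perp$. Indeed, from your inward formulas $H=-y\overbar H+\rho d'$ and $\langle X,N\rangle=-\frac{\rho d}{y}+\rho d'$ one gets $y\overbar H=\frac{\rho d}{y}$. This is the same wrong equation $\rho^2d''=\frac{n-1}{d}-\frac{nd}{y^2}$ as before; your displayed right-hand side $+\frac{d}{y}$ drops a minus sign. Fixing $\overbar H$ ``up to a global sign'' by matching the target is not a derivation.

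Your cross-check, on the other hand, is sound, and it can be promoted to a genuine alternative proof. Where $d'\neq0$, $\Sigma$ is locally the vertical rotational graph of $\phi=d^{-1}$, and the substitutions $d'=1/\phi'$, $d''=-\phi''/(\phi')^3$ turn \eqref{eq-EDOPsi} exactly into \eqref{eq-lemmahorizontalgraph}. You would still need to handle $\{d'=0\}$:
\begin{itemize}
\item If \eqref{eq-lemmahorizontalgraph} holds, then $d''>0$, so $d'$ vanishes at most once and continuity finishes the argument.
\item Conversely, no open piece of $\Sigma$ is a cylinder ($d$ constant), since there $H=-\frac{(n-1)y}{nd}<0<\frac{d}{y}=\langle p,N\rangle$. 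So $\{d'\neq0\}$ is dense and the ODE extends by continuity.
\end{itemize}
That route reuses Lemma~\ref{lem-rotationalODE01} instead of recomputing curvatures, while the paper's direct computation is self-contained.
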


\begin{proof}
Defining the function $\Phi(x_1,\dots, x_{n+1}):=x_1^2+\cdots+x_n^2-d^2(x_{n+1})$, we have
that $\Sigma=\Phi^{-1}(0)$, so that
\begin{equation} \label{eq-overbarN}
\overbar N:=\frac{\nabla\Phi}{\|\nabla\Phi\|}=\frac{\rho}{d}(x_1,\dots, x_n,-dd')
\end{equation}
is a Euclidean unit normal to $\Sigma$.
With this orientation, the Euclidean
mean curvature of $\Sigma$ is given by
(cf. proof of~\cite[Corollary 13.37]{andrewsetal}\footnote{Notice that, in~\cite{andrewsetal},
the mean curvature is nonnormalized, and the second fundamental form differs from ours by a sign.})
\begin{equation} \label{eq-overbarH}
\overbar H=\frac{\rho}{n}(d''\rho^2-n+1).
\end{equation}

Together with~\eqref{eq-MCrelation}, equations~\eqref{eq-overbarN}
and~\eqref{eq-overbarH} yield
\[
H=\frac{\rho x_{n+1}}{n}\left( \rho^2d''-\frac{n-1}{d}\right)-\rho d'.
\]
Also, for any $p=(x_1,\dots, x_{n+1})\in\Sigma$, one has
\[
\langle N,p\rangle=\rho\left(\frac{d}{x_{n+1}}-d'\right),
\]
and these two last equalities imply that
$\langle N,p\rangle=H$ is equivalent to~\eqref{eq-lemmahorizontalgraph}.
\end{proof}

Now, we can construct the translating catenoids in $\h^{n+1}$ just as
in~\cite{dLRS}. More specifically, we choose suitable solutions to the differential
equations~\eqref{eq-EDOPsi} and~\eqref{eq-lemmahorizontalgraph},
and then obtain the translating catenoid by gluing the corresponding horizontal
and vertical rotational graphs; see Figure~\ref{fig-translatingcatenoid}.
Moreover, as one can immediately see from the
results on the solutions of these equations obtained in~\cite{dLRS} for $n=2$,
their qualitative behavior is independent of the dimension $n\ge 2$.
Consequently, the reasoning in the proof of~\cite[Theorem 3.10]{dLRS}
can be employed to establish the following existence result.

\begin{figure}[htbp]
\includegraphics[width=\textwidth]{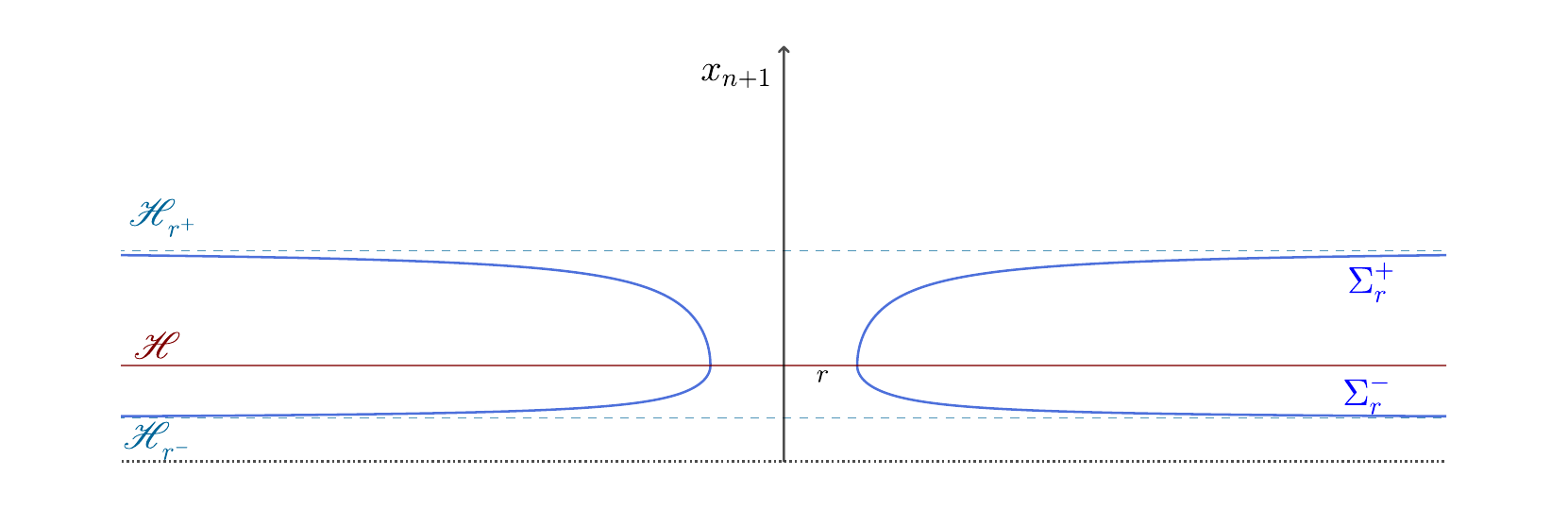}
\caption{\small {{The profile curve of a translating catenoid $\Sigma_r$
in $\h^{n+1}$ between (and asymptotic to)
two horospheres $\mathscr H_{r^-}$ and
$\mathscr H_{r^+}$. $\Sigma_r\setminus \mathscr H$ decomposes
as two vertical graphs $\Sigma_r^-$ and $\Sigma_r^+$ over the complement
of the Euclidean ball of radius $r$ centered at the rotation
axis $x_{n+1}$ in the horosphere $\mathscr H$.}}}
\label{fig-translatingcatenoid}
\end{figure}

\begin{theorem} \label{th-translatingcatenoids}
Given a horizontal horosphere $\mathscr H$,
there exists a one-parameter family
$\mathscr C:=\{\Sigma_{r}\mid r>0\}$
of noncongruent, properly embedded rotational
annular translators in $\h^{n+1}$ (to be called {\em
translating catenoids}). For
each $r>0$ (called the {\em neck size} of
$\Sigma_r\in \mathscr C$), it holds:
\begin{itemize}[parsep=1ex]
\item[\rm i)] $\Sigma_{r}$ is contained in a slab determined by
two horospheres $\mathscr H_{r^-}$ and $\mathscr H_{r^+}.$
In particular, the asymptotic boundary of $\Sigma_r$ is the
point at infinity of the horosphere $\mathscr H$.
%%%%%%%%%%%
\item[\rm ii)] $\Sigma_r$ is the union of two vertical graphs $\Sigma_{r}^-$ and
$\Sigma_{r}^+$ over the complement of the Euclidean
$r$-ball\, $\mathcal B_r$ centered at the rotation axis
in the horosphere $\mathscr H$.
%%%%%%%%%%%%%%
\item[\rm iii)] The graphs $\Sigma_{r}^-$ and $\Sigma_{r}^+$ lie in
distinct connected components of \,$\h^{n+1}~-~\mathscr H$
with common boundary the $r$-sphere that bounds $\mathcal B_r$ in
$\mathscr H,$ being $\Sigma_{r}^-$
asymptotic to $\mathscr H_{r^-}$ and $\Sigma_{r}^+$
asymptotic to $\mathscr H_{r^+}.$
\end{itemize}

In addition, when $r\to 0$ or when $r\to \infty$,
both $r^+$ and $r^-$ converge to 1 and
the limiting behavior of $\Sigma_r$ is
as follows:

\begin{itemize}[parsep=1ex]
\item[\rm iv)] As $r\to 0,$ $\Sigma_r$ converges
(on the $C^{2,\alpha}$-norm, on compact sets
away from $\{x_{n+1}=0\}$) to a double copy of
$\mathscr H.$

\item[\rm v)] As $r\to +\infty,$
$\Sigma_r$ escapes to infinity.
\end{itemize}
\end{theorem}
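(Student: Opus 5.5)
By the homothety invariance of the whole setting ($\Gamma_t$ is an isometry that maps translators to translators), I will take $\mathscr H=\mathscr H_1$. The plan is to build $\Sigma_r$ by gluing a horizontal rotational graph near the neck to two vertical rotational graphs, one going up and one going down, following the proof of \cite[Theorem 3.10]{dLRS}.

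\textbf{Neck.} Fix $r>0$ and let $d$ be the solution of \eqref{eq-lemmahorizontalgraph} with $d(1)=r$ and $d'(1)=0$. By Lemma~\ref{lem-rotationalODE021}, $d$ is strictly convex with a minimum $r$ at $x_{n+1}=1$. Its maximal interval around $1$ is limited by blow-up of $d'$, and I expect $d'\to\pm\infty$ at finite heights $a<1<b$ while $d$ stays finite there. To show this, compare with the Riccati-type inequality $d''\ge c(1+(d')^2)$, where $c>0$ is a lower bound for $nd/x_{n+1}^2$ on bounded height ranges. This forces blow-up of $d'$ in finite "time". Since $x_{n+1}=d^{-1}$ then has vanishing derivative at $s=d(a)$ and at $s=d(b)$, the hypersurface continues past these points as two vertical rotational graphs. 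On each branch, $s\ge r$ holds and $\phi'$ has a definite sign.

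\textbf{Vertical wings.} Writing $w=\phi'$, equation \eqref{eq-EDOPsi} becomes
\[
w'=-w(1+w^2)\Big(\frac{ns}{\phi^2}+\frac{n-1}{s}\Big).
\]
Consequently $w$ never changes sign, and $|w|$ is decreasing, so $\phi$ is monotone. Each branch starts with $|w|=\infty$, i.e. with a vertical tangent. From the same equation, $(\log|w|)'\le -(n-1)/s$, so $|w|\le Cs^{1-n}$. For $n\ge 3$ this gives integrability of $\phi'$ at infinity directly. For $n=2$ I will use the extra term $ns/\phi^2$, which gives $(\log|w|)'\le -2s/\phi^2$ with $\phi$ bounded on the branch, hence Gaussian decay of $|w|$. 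In either case the branch is defined on $[d(\cdot),\infty)$ and $\phi\to r^\pm$ finite, which yields i)--iii).

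The upper branch rises to $r^+>1$ and the lower one decreases to $r^->1$? That cannot be right: the lower branch starts at height $a<1$ and $\phi$ is decreasing there, so $r^-<a<1<b<r^+$. To write the two wings as graphs over $\mathscr H\setminus\mathcal B_r$ exactly as in ii), I will reparametrize the neck into its upper and lower halves at $x_{n+1}=1$. Embeddedness follows from the monotonicity of $d$ on each half and of $\phi$ on each wing. Noncongruence follows because the neck sizes $r$ differ.

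\textbf{Limits iv)--v).} These require continuous dependence on $r$ and quantitative estimates, and I expect this to be the main obstacle.

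For $r\to0$, I will rescale the neck region. Near the neck, $d$ is small and the term $(n-1)/d$ dominates, so the neck behaves like a Euclidean catenoid of size $r$ and $b-a=O(r)$. On the wings, the decay estimate $|w|\le C(r/s)^{n-1}$, or the corresponding Gaussian bound for $n=2$, integrates to $|r^\pm-1|\to 0$. Combined with ODE smooth dependence, this gives $C^{2,\alpha}$ convergence of each wing to $\mathscr H$ on compact sets.

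For $r\to\infty$, the neck radius diverges, so any compact set is eventually missed. To see that $r^\pm\to1$, I will use the term $ns/\phi^2\ge ns$. This makes $|w|$ decay like $e^{-c(s^2-r^2)}$ after the turning point, and the neck height $b-a$ also shrinks since $d''\gtrsim d$ is large. Both effects force $r^\pm\to1$.

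I expect the delicate part to be the uniform control of the neck heights $a(r)$ and $b(r)$ as $r\to 0$ and as $r\to\infty$. For this I would first try comparison of $d$ with explicit sub- and supersolutions of the Riccati inequality.
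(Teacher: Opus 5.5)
Your overall strategy (a horizontal rotational graph at the neck and vertical rotational graphs for the wings, as in \cite[Theorem 3.10]{dLRS}) is the same as the paper's. However, the neck step, and with it the gluing, is wrong.

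It is not true that $d'\to\pm\infty$ at $a,b$ while $d$ stays finite. Nor can the surface ``continue past these points'' as vertical graphs with $\phi'=0$ there. Constants solve \eqref{eq-EDOPsi}, because horospheres are translators. So by uniqueness any solution with $\phi'(s_0)=0$ is constant; your own equation $w'=-w(1+w^2)(\cdots)$ already shows that $w=0$ is invariant. Hence a translating profile curve can never have a horizontal tangent unless it is a horizontal line.

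You can also see this directly. Suppose $d$ stayed bounded near $b$. Then $g:=nd/x_{n+1}^2+(n-1)/d$ would be bounded there by some $G$, and $(\arctan d')'=g$ would give $\pi/2-\arctan d'(x)\le G(b-x)$. That is, $d'(x)\ge\cot(G(b-x))$, which is not integrable at $b$. So $d\to+\infty$ at both ends of its maximal interval. Your text is also internally inconsistent here: the wings are said to start with vanishing derivative and, a few lines later, with $|w|=\infty$.

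The correct picture is simpler.
\begin{itemize}
\item Your Riccati comparison does give finite blow-up heights. In fact, by AM--GM, $g\ge 2\sqrt{n(n-1)}/x_{n+1}$, which yields $e^{-\pi/(4\sqrt{n(n-1)})}\le a<1<b\le e^{\pi/(4\sqrt{n(n-1)})}$, uniformly in $r$.
\item Since $d\to\infty$ at $a$ and at $b$, the inverses of $d|_{(1,b)}$ and $d|_{(a,1)}$ are already the wings $\Sigma_r^\pm$. They are vertical graphs over all of $\{s>r\}$, with $\phi'=\pm\infty$ at the neck sphere $s=r$. They are asymptotic to the heights $b$ and $a$.
\item Thus $r^+=b$ and $r^-=a$, and not $r^-<a<1<b<r^+$ as you assert.
\item The horizontal graph is needed only to show that the two wings fit together smoothly across the neck sphere, where both have vertical tangents. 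That is the gluing meant in the paper and in \cite{dLRS}.
\end{itemize}

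In this corrected picture, i)--iii) need no wing decay estimates at all. For v), the bound $g\ge nr/x_{n+1}^2$ gives $b-1,\ 1-a=O(1/r)$ at once. For iv), note that for $n=2$ the Euclidean catenoid has infinite height, so the slab height $r^+-r^-$ is of order $r\log(1/r)$ rather than $O(r)$. It still tends to $0$, which is all you need.
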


\begin{remark}
In Section~3.3 of~\cite{dLRS}, a tangency principle for translators to MCF
in $\h^{3}$ was established, as a consequence of the maximum principle for quasilinear
elliptic operators. Since the maximum principle is valid in all dimensions,
the tangency principle also holds in $\mathbb{H}^{n+1}$.
This fact, together with the asymptotic properties
of the family $\mathscr C$ presented in Theorem~\ref{th-translatingcatenoids},
implies that
there is no properly immersed
translator to {\rm MCF} in $\h^{n+1}$ which is contained in
the convex side of a geodesic cylinder with vertex at the origin; see the proof
of~\cite[Theorem 3.25]{dLRS}.
In particular, there is no closed (i.e., compact without boundary)
translator to {\rm MCF} in $\h^{n+1}$.
\end{remark}

\section{Stability of horospheres}\label{sec-stability}
In this section, we show that horospheres in $\h^{n+1}$
are dynamically stable as graphical solutions to MCF. More precisely, we have the following result.

\begin{theorem} \label{thm-stability}
Let $G(x,t)=e^{v(x,t)}x$, $x\in\mathcal S,\ t\geq0$,
be a graphical translating soliton
to {\rm MCF} in $\h^{n+1}$ whose initial condition
$G(\mathcal S\times\{0\})$ is a horizontal horosphere.
Set $v_0:=v(.\,, 0)$ and assume that
$u_0$ is a locally Lipschitz function on $\mathcal S$ such that
\begin{equation} \label{eq-condition}
\lim_{x\to\partial\mathcal S}|u_0(x)-v_0(x)|=0.
\end{equation}
Under these conditions, for any graphical solution $u=u(x,t)$
to~\eqref{eq-MCFequation} defined for all $t\ge 0$ and
with initial condition $u_0$, one has
that $u_t-v_t$
converges uniformly to 0 as $t\to+\infty$.
\end{theorem}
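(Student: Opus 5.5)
Throughout, let $\mathscr H_h$ be the initial horosphere. Then $v_t$ describes $\mathscr H_{he^t}$, since the translating soliton is $\Gamma_t(\mathscr H_h)$. In the radial graph coordinates, $e^{v_t(x)}x_{n+1}=he^t$, so
\[
v_t(x)=\log h+t-\log x_{n+1}.
\]
The plan follows the strategy of~\cite{schulze}. First I reduce to the case of a horosphere at height $h=1$ by applying a hyperbolic translation $\Gamma_{-\log h}$; this is an isometry that commutes with the flow and simply shifts $u$ and $v$ by constants. The key idea is to use the translating catenoids of Theorem~\ref{th-translatingcatenoids}, suitably translated vertically, as barriers. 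Their graph parts $\Sigma_r^\pm$ are asymptotic to horospheres $\mathscr H_{r^\pm}$ with $r^\pm\to 1$ as $r\to0$ or $r\to\infty$. I also use horospheres themselves, which are translators, together with White's avoidance principle~\cite{white}.

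\emph{Step 1 (uniform $C^0$ bounds).} Since $u_0$ is continuous and \eqref{eq-condition} holds, $u_0-v_0$ is bounded on $\mathcal S$, say $|u_0-v_0|\le C$. The solutions $v+C$ and $v-C$ are again translating horosphere solutions, namely $\Gamma_{\pm C}$ applied to $\mathscr H_1$. The comparison principle for the quasilinear parabolic equation \eqref{eq-MCFequation02} then gives $v_t-C\le u_t\le v_t+C$ for all $t$. To justify comparison on the noncompact domain $\mathcal S$, I would use the avoidance principle, exploiting that the ideal boundary point at infinity is common to all these hypersurfaces. This produces a bounded function $w_t:=u_t-v_t$.

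\emph{Step 2 (trapping by catenoids).} Fix $\varepsilon>0$. By \eqref{eq-condition}, outside a compact set $K\subset\mathcal S$ we have $|w_0|<\varepsilon$. I would choose a catenoid $\Sigma_r$ with $r$ small, so that $r^\pm$ are close to $1$, and translate it by $\Gamma_{\pm(\varepsilon+\delta)}$ and by a large vertical amount. The translated upper wing $\Gamma_a(\Sigma_r^+)$ with $a$ slightly above $0$ should lie above the initial graph, and similarly a lower wing should lie below it. The neck is placed far below (respectively above) in the Euclidean picture so that it is disjoint from the graph. This is the delicate point, and I expect it to be the main obstacle. On $K$ one only has the crude bound $|w_0|\le C$, and the catenoid must lie between $\mathscr H_{r^-}$ and $\mathscr H_{r^+}$ while passing through the region $K$ corresponds to.

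What I would try first is to use the geometry of radial graphs over $\mathcal S$. Near the north pole, a hyperbolic translation $\Gamma_a$ with large $|a|$ moves the compact part far away. Conversely, one may use translating catenoids with large neck $r\to\infty$, whose necks "escape to infinity" by item (v). Because $r^\pm\to1$ in that limit as well, the wings stay $\varepsilon$-close to $\mathscr H_1$ while the neck region covers $K$. Once the initial configuration is disjoint, the avoidance principle keeps the evolving graph between the two translating catenoid wings for all $t$. Since the catenoids translate by $\Gamma_t$ exactly like $v$, this gives $-\varepsilon'\le w_t\le \varepsilon'$ outside a set that is compact in the $\mathcal S$-coordinates uniformly in $t$, or at least $\limsup_t \sup|w_t|\le \varepsilon'$ in the wing region. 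Here $\varepsilon'\to0$ as $\varepsilon\to0$.

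\emph{Step 3 (convergence in time).} Following~\cite{schulze}, I consider
\[
m(t)=\sup_{\mathcal S} w_t .
\]
It is nonincreasing by comparison with $v+\mathrm{const}$, so it converges to some limit $m_\infty$, and the analogous statement holds for the infimum. Suppose $m_\infty>0$. Then consider the pulled-back flows $\Gamma_{-t_k}(\text{graph } u_{t+t_k})$. These are solutions with uniformly bounded $w$, and interior parabolic estimates, away from $\partial\mathcal S$ and controlled by Step 2, give subsequential smooth convergence to an eternal solution $\tilde w$. This limit has $\sup\tilde w\equiv m_\infty$ for all times. By Step 2, the supremum is attained at an interior point and not at $\partial\mathcal S$. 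The strong maximum principle applied to the linear equation satisfied by the difference of the limiting solution and $v+m_\infty$ then forces $\tilde w\equiv m_\infty$, which contradicts the boundary decay from Step 2. Hence $m_\infty\le 0$, and by symmetry the infimum tends to $0$ as well, which yields the uniform convergence $u_t-v_t\to0$.
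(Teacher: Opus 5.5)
Your proposal is correct and follows essentially the same route as the paper. Catenoid barriers plus White's avoidance principle give $|u_t-v_t|<\epsilon$ outside a fixed cone, uniformly in $t$ (the paper's convergence-in-space claim). Then time-translates, compactness and the strong maximum principle, as in Clutterbuck--Schn\"urer--Schulze, give uniform convergence.

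Of your two ideas in Step~2, only the large-neck one works, and it is exactly the paper's construction. Each catenoid lies entirely in a thin slab just above (resp.\ below) the $\epsilon$-slab, and outside the dilation-invariant solid cone $\mathscr C_{R_1}$. That cone contains $F_0(\Omega_{R_1})$ whatever the values of $u_0$ there, so the crude bound on $K$ never enters. By contrast, a single catenoid with one wing above and one below the graph would have to cross it.
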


Theorem~\ref{thm-stability} constitutes the hyperbolic version
of~\cite[Theorem 1.1]{schulze}, which is set in Euclidean space.
It settles the stability of the graphical solution
$v(x,t)$ if we regard $u_0$ as a perturbation of $v_0$
with $C^0$ decay at infinity.
Indeed, as $t\to+\infty$, $u(x,t)$  becomes asymptotic
to the very solution $v(x,t)$ that was perturbed initially.
We also note that the notation $x\to\partial \cS$ means that
the variable $x$ escapes any compact set in $\cS$.

Next, we use
White's avoidance principle~\cite{white}
and the strong maximum principle for parabolic equations (see, for
instance,~\cite[Theorem 2.1.1 and Corollary 2.1.2]{mantegazza})
to prove Theorem~\ref{thm-stability}.
In the application of the avoidance principle,
we use the translating catenoids of
Theorem~\ref{th-translatingcatenoids} as barriers, which distinguishes
our proof from the proof in~\cite{schulze}.

\begin{proof}[Proof of Theorem~\ref{thm-stability}]
Let $v$, $v_0$ and $u_0$ be as stated. By Theorem~\ref{thm-unterberger},
there exists a function
$u\colon \mathcal{S}\times[0,\infty)\to \R$ such that
$F(x,t) = e^{u(x,t)}x$ is a graphical solution to MCF with
$u(x,0) = u_0(x)$ for all $x\in \cS$.
Let $\omega\colon \cS\times[0,\infty)\to \R$ be given by
$\omega(x,t) = u(x,t)-v(x,t)$; we need to show that
$\omega_t = \omega(\cdot,t)$
converges to zero uniformly when $t\to \infty$.

For any $r>0$, denote by $\mathscr C_r$ the closed, mean convex region
bounded by the rotational hyperbolic
cylinder of $\h^{n+1}$ of radius $r$ about the $x_{n+1}$-axis and let
$\Omega_r = \cS \cap \mathscr C_r$.
We also set the maps
$$F_0(x)=e^{u_0(x)}x \quad\text{and}\quad G_0(x)=e^{v_0(x)}x,
\,\,\, x\in\mathcal S,$$
and, througout the proof,
use the notation $u_t(x) = u(x,t)$, $v_t(x) = v(x,t)$.
Our first argument is to show that
$\lim_{x\to \partial \cS}\omega_t(x) = 0$, with a uniform
decay for all $t\geq0$.

\begin{claim}[Convergence in space]\label{cl:convergenceinspace}
For any $\epsilon >0$ there exists $R>0$ such that, for any $t\geq 0$,
\begin{equation} \label{eq-epsilon02}
\abs{\omega_t(x)} =|u_t(x)-v_t(x)|<\epsilon
\,\,\, \forall x\in\mathcal S-\Omega_{R}.
\end{equation}
\end{claim}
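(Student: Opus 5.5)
We prove the claim with translating catenoids as barriers and White's avoidance principle. First, normalize. After a hyperbolic translation $\Gamma_s$ (an isometry commuting with MCF) we may assume $G_0(\cS)$ is the horosphere $\mathscr H=\mathscr H_1$. Since $\mathscr H$ is a translator, its flow is $G_t(\cS)=\Gamma_t(\mathscr H)=\mathscr H_{e^t}$. In radial graph terms, $v_t(x)=v_0(x)+t$ with $e^{v_0(x)}x_{n+1}=1$. The quantity $\omega_t(x)$ is the hyperbolic signed distance, measured along the geodesic ray through $x$, between the point of $F_t(\cS)$ on that ray and the horosphere $\mathscr H_{e^t}$, rescaled by a factor comparable to the log-height ratio. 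So it suffices to trap $F_t(\cS)$, outside a hyperbolic cylinder $\mathscr C_R$, between two hypersurfaces whose heights are within a factor $e^{\pm\epsilon}$ of $e^t$.

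Fix $\epsilon>0$. By Theorem~\ref{th-translatingcatenoids}~iv), choose a neck size $r>0$ small enough that $r^\pm\in(e^{-\epsilon/2},e^{\epsilon/2})$. Then $\Sigma_r$ lies in the slab between $\mathscr H_{e^{-\epsilon/2}}$ and $\mathscr H_{e^{\epsilon/2}}$, and it separates $\h^{n+1}$ into an "inner" region containing the axis near the neck and an "outer" region. Use a translated copy $\Gamma_{\epsilon}(\Sigma_r)$ as the upper barrier and $\Gamma_{-\epsilon}(\Sigma_r)$ as the lower one, each with its neck centered on the $x_{n+1}$-axis.

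The initial surface $F_0(\cS)$ does not meet these catenoids in the region where they are graphs far from the neck. This follows from hypothesis~\eqref{eq-condition}: choose a compact $K\subset\cS$ with $|u_0-v_0|<\epsilon/4$ off $K$. The delicate part is the region over $K$ and near the neck. To handle it, dilate horizontally: replace the barrier by its image under an isometry fixing $\infty$ that moves the neck far from $\Gamma$-rays through $K$. Since $\Sigma_r$ has asymptotic boundary only the point at infinity, we instead use the freedom of horizontal parabolic translations $p\mapsto p+a$, $a\in\R^n\times\{0\}$, to place the neck far away from the compact set $F_0(K)$. Both parabolic translations and the maps $\Gamma_t$ preserve horospheres. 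With $|a|$ large, the region enclosed by the translated catenoid near its neck is disjoint from $F_0(K)$, while $F_0(\cS\setminus K)$ lies strictly between $\mathscr H_{e^{-\epsilon/4}}$ and $\mathscr H_{e^{\epsilon/4}}$ and thus avoids both barriers. The translated catenoids are still translators, so they flow as $\Gamma_t(\Sigma)$.

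White's avoidance principle applies to the proper flows $F_t(\cS)$ and $\Gamma_t(a+\Gamma_{\pm\epsilon}\Sigma_r)$. It keeps them disjoint for all $t$, because the initial distance is positive and the catenoids are asymptotic to horospheres, which gives control at infinity. Hence, for every $t$ and every $a$ with $|a|\ge A$, $F_t(\cS)$ lies below $\Gamma_t(a+\Gamma_\epsilon\Sigma_r^+)$-type sheets and above the lower ones. Taking the union over all admissible $a$ shows that $F_t(\cS)$ lies between $\mathscr H_{e^{t-\epsilon}}$ and $\mathscr H_{e^{t+\epsilon}}$ at every point whose ray is covered by the sheets far from the necks. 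Because $\Gamma_t$ is a radial dilation, this covered region corresponds to $\cS\setminus\Omega_R$ for some $R$ depending only on $A$ and $r$, uniformly in $t$; here we use that $\Omega_R$ is invariant under $\Gamma_t$. Converting heights to $u_t-v_t$ gives~\eqref{eq-epsilon02} with $2\epsilon$ in place of $\epsilon$.

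The main obstacle is the initial non-intersection in the region enclosed by the barrier and near its neck. We need the sheet $\Sigma_r^-$, which dips below $\mathscr H$, to avoid $F_0$ where $u_0$ is uncontrolled. We also need to verify that the region swept by the family of translates corresponds exactly to the complement of a hyperbolic cylinder $\Omega_R$ after dilation. We would first try the choice above: parabolic translations of the neck far away, combined with the bound of $u_0-v_0$ on compact sets given by local Lipschitz continuity.
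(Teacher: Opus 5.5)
\textbf{There is a genuine gap.} It sits exactly at the step you flag as delicate: once the necks are moved far from $F_0(K)$, the initial disjointness of $F_0(\mathcal S)$ from the barriers fails in general.

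A translating catenoid is a union of two graphs over the complement of a ball around its neck. So if the neck sits over a far-away point $a$, its sheets lie directly above and below $F_0(K)$. On $K$ you only know that $u_0-v_0$ is bounded, not small, and local Lipschitz continuity gives nothing more. Suppose $u_0-v_0>3\epsilon/2$ somewhere on $K$. Then $F_0(\mathcal S)$ rises above the whole upper barrier $a+\Gamma_\epsilon(\Sigma_r)$ over a point outside its neck ball. Over the neck ball, however, it stays at height below $e^{\epsilon/4}$. Without meeting the catenoid, the only way from below the lower sheet to above the upper sheet is through the neck. Hence the connected hypersurface $F_0(\mathcal S)$ must intersect the barrier.

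In fact your scheme proves too much. Taking the union over all $|a|\ge A$ already at $t=0$ would give $|u_0-v_0|<2\epsilon$ on all of $\mathcal S$, which is false in general.

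There is also a second, independent error. The parabolic translation $p\mapsto p+a$ does not commute with $\Gamma_t$, since $\Gamma_t(p+a)=\Gamma_t(p)+e^ta$. So $a+\Sigma$ is a translator for the dilations centered at $a$, not at the origin, and its flow is $a+\Gamma_t(\Sigma)$, not $\Gamma_t(a+\Sigma)$. The region left uncovered is then not $\Gamma_t$-invariant, and the uniformity in $t$ is lost.

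\textbf{The missing idea} is to let the uncontrolled part pass \emph{through} the neck rather than keeping it away from it. Because $\mathscr C_{R_1}$ is a Euclidean cone with vertex at the origin and $F_0$ is a radial graph, $F_0(\Omega_{R_1})\subset\mathscr C_{R_1}$ however large $|u_0-v_0|$ is there. The paper therefore uses catenoids coaxial with the $x_{n+1}$-axis with a \emph{large} neck. This is the regime $r\to\infty$, where $r^\pm\to1$ as well; a small coaxial neck, as in your first choice, would sit inside $\mathscr C_{R_1}$. These catenoids are moved by some $\Gamma_s$ into the thin slabs $\Lambda_\epsilon^\pm$, disjoint from $\mathscr C_{R_1}$. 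They then have three properties:
\begin{itemize}
\item they are at positive distance from $F_0(\mathcal S)$;
\item they are genuine translators for $\Gamma_t$;
\item their necks stay inside $\mathscr C_R$ for all $t$, because $\mathscr C_R$ is $\Gamma_t$-invariant.
\end{itemize}
White's avoidance principle then keeps $F_t(\mathcal S\setminus\Omega_R)$ between the horospheres at heights $e^{t-\epsilon}$ and $e^{t+\epsilon}$, uniformly in $t$.
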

\begin{proof}[Proof of Claim~\ref{cl:convergenceinspace}]
Given $\epsilon >0$,
it follows from~\eqref{eq-condition} that there exists
$R_1>0$ for which
\begin{equation} \label{eq-epsiln00}
|u_0(x)-v_0(x)|<\epsilon/2 \,\,\, \forall x\in\mathcal S-\Omega_{R_1}.
\end{equation}
Since
$\abs{u_t(x)-v_t(x)}$ is the hyperbolic length of the
orthogonal projection of the segment joining $F(x,t)$ and $G(x,t)$
over the $x_{n+1}$-axis of $\R^{n+1}_+$,~\eqref{eq-epsiln00}
implies that $F_0(\mathcal S-\Omega_{R_1})$
is contained in the slab
\begin{align*}
\Lambda_\epsilon = \{(x_1,\,x_2,\,\ldots,\,x_{n+1})\in \hn{n+1}\mid
e^{-\epsilon/2}<x_{n+1}<e^{\epsilon/2}\}.
\end{align*}
Let, for any $h\in \R$,
$\mathscr H_h$ denote the horizontal horosphere at height $e^h$.
Then, $F_0(\cS-\Omega_{R_1})$ is between $\mathscr H_{-\epsilon/2}$ and
$\mathscr H_{\epsilon/2}$,
with a positive distance from these horospheres.
We also consider
\begin{align*}
\Lambda_\epsilon^+ &= \{(x_1,\,x_2,\,\ldots,\,x_{n+1})\in \hn{n+1}\mid
e^{\epsilon/2}<x_{n+1}<e^{\epsilon}\}\\
\Lambda_\epsilon^- &= \{(x_1,\,x_2,\,\ldots,\,x_{n+1})\in \hn{n+1}\mid
e^{-\epsilon}<x_{n+1}<e^{-\epsilon/2}\}
\end{align*}
the slabs of width $\epsilon/2$, adjacent to $\Lambda_\epsilon$, with
$\Lambda_\epsilon^+$ above and $\Lambda_\epsilon^-$ below
$\Lambda_\epsilon$.

By Theorem~\ref{th-translatingcatenoids}, there
exist two translating catenoids $\Sigma^-$ and $\Sigma^+$ in $\h^{n+1}$,
both disjoint from $\mathscr C_{R_1}$, with
$\Sigma^-\subset \Lambda^-_\epsilon$ and asymptotic
to $\mathscr H_{-\epsilon/2}$ and $\Sigma^+\subset\Lambda_\epsilon^+$ and
asymptotic to
$\mathscr H_{\epsilon/2}$, see Figure~\ref{fig-barriers}.
By construction, both $\S^+$ and $\S^-$
are complete translators to MCF in $\hn{n+1}$, each of which
a positive distance from $F_0(\mathcal S)$.

Let, for $t\in \R$, $\Gamma_t$ be the
hyperbolic translation of
$\hn{n+1}$ on the direction of the $x_{n+1}$-axis,
given in coordinates by~\eqref{eqtranslhyp}.
Then, White's avoidance principle~\cite[Theorem~1]{white}
implies that, for all $t\geq0$, $F(\cS\times\{t\})$ stays a positive
distance from both $\Gamma_t(\S^-)$ and $\Gamma_t(\S^+)$.
In particular, if $R>0$ is such that $\mathscr C_R$ intersects
both $\S^-$ and $\S^+$ (in particular, $R>R_1$),
for any $x\in \cS\setminus \mathscr C_R$ and any
$t\geq 0$ we have that $F(x,t)$ lies in the slab
bounded between the horospheres
$\Gamma_t(\mathscr H_{-\epsilon}) =\mathscr H_{t-\epsilon}$
and $\Gamma_t(\mathscr H_{\epsilon}) = \mathscr H_{t+\epsilon}$,
which proves~\eqref{eq-epsilon02}.
\end{proof}

\begin{figure}
\centering
\includegraphics[width=\textwidth]{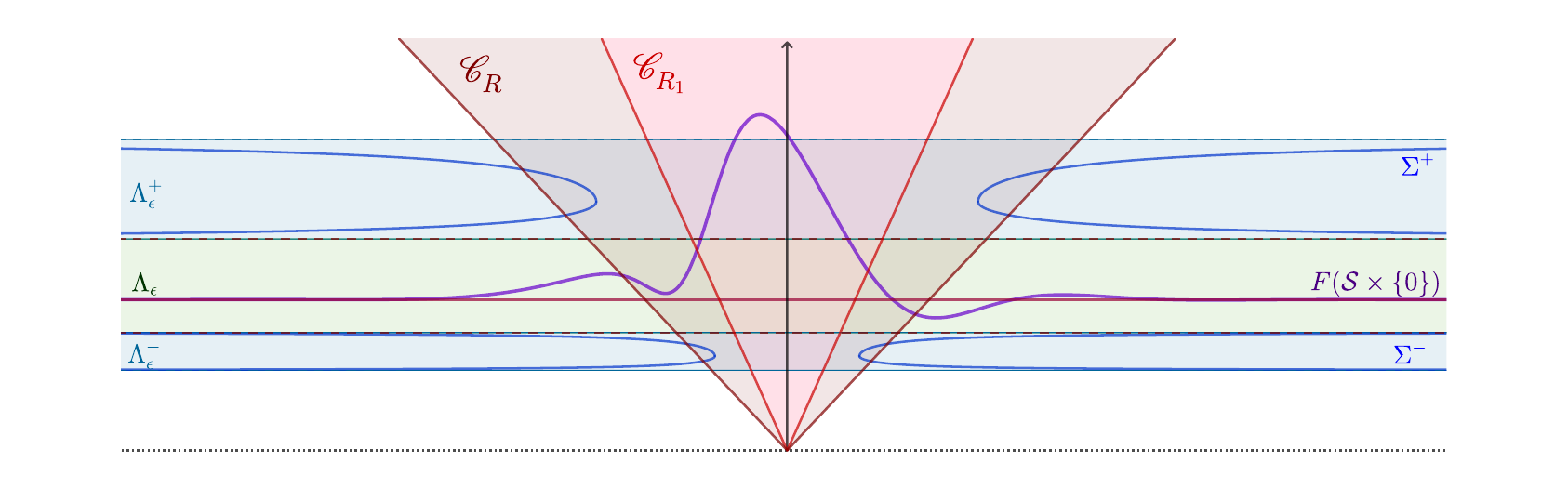}
\caption{\small Translating catenoids acting as barriers:
$\S^+\subset \Lambda_\epsilon^+$ and $\S^-\subset \Lambda_\epsilon^-$
are a positive distance from the graph
$F(\mathcal S\times\{0\})$, whose part outside $\mathscr C_{R_1}$
lies in the slab $\Lambda_\epsilon$.
As the surfaces flow under the MCF,
$F_t(\mathcal{S}\setminus \mathscr C_R)$ remains
between the translating catenoids
$\Gamma_t(\Sigma^-)$ and $\Gamma_t(\Sigma^+)$.}
\label{fig-barriers}
\end{figure}

Next, we argue as in~\cite[Lemma 4.2]{schulze} to
show that $\omega_t$ converges, in the $C^0$ norm,
to zero when $t\to \infty$.
For $\epsilon >0$, let $R$ be given by Claim~\ref{cl:convergenceinspace}.
We will use $R$ to show that there exists some $T>0$
such that
$\abs{\omega_t(x)} <\epsilon$ for all $x\in \cS$ and all $t\geq T$.

First, a standard argument
(for instance, as in the proof of~\cite[Theorem 17.1]{GT})
implies that $\omega$ satisfies a linear parabolic equation. Indeed,
let $\mathcal{Q}$ be the quasi-linear elliptic operator
from~\eqref{eqopQ}. Then, $u$ and $v$ satisfy
\begin{align*}
\frac{\partial u}{\partial t}(x,t)=\mathcal Q(u),\quad
\frac{\partial v}{\partial t}(x,t)=\mathcal Q(v).
\end{align*}
For $\theta\in[0,1]$, let $v^\theta = \theta u+(1-\theta)v
= v + \theta \omega$, so $v^0 = v$ and $v^1 = u$,
while $\frac{d}{d\theta}v^\theta = \omega$.
Then,
\begin{align}\label{eqintermediate1}
\frac{d \omega}{d t} & = \mathcal Q(u)-\mathcal Q(v)
 = \int_0^1 \frac{d}{d\theta} \mathcal Q (v^\theta)d\theta.
\end{align}
Recall that
$\mathcal Q (v^\theta) =
Q(x,Dv^\theta_t,D^2v^\theta_t)$ and let $Q_{p_i}$
denote the derivative of
$Q$ in the $i$-th direction of $p\in \R^{n+1}$ and
$Q_{r_{ij}}$ denote the derivative of $Q$ in the $(i,j)$ entry
of the matrix $r\in \R^{(n+1)\times(n+1)}$.
It follows from~\eqref{eqintermediate1} that
\begin{align}\label{eqparaboliclin}
\frac{d \omega}{d t}(x,t) & = \sum_{i,j} a^{ij}(x,t)D_{ij}(\omega_t)(x)
+\sum_i b^i(x,t) D_i(\omega_t)(x),
\end{align}
where
\begin{align*}
a^{ij}(x,t)
&= \int_0^1 Q_{r_{ij}}(x,Dv^\theta_t,D^2v^\theta_t)d\theta,&
b^{i}(x,t)
&= \int_0^1 Q_{p_{i}}(x,Dv^\theta_t,D^2v^\theta_t)d\theta
\end{align*}
are coefficients that depend uniquely on the coordinates $x$ and
$t$.

For $t\ge 0$, define
$\Upsilon_t:=\{x\in\mathcal S\,;\, \omega_t(x)\ge \epsilon\}$.
Assume that $\Upsilon_{t_0}\neq \es$ for some $t_0>0$.
Then,
there exists $\delta>0$ such that $\Upsilon_t\neq \es$
for all $t\in(t_0-\delta,t_0+\delta)$. In particular, for any such
$t$, $\omega_t$ attains
its maximum in some point of $\Upsilon_t$, and
$\Upsilon_t\subset\Omega_{R}$, by~\eqref{eq-epsilon02}.
Hence, together with the fact that
$\omega$ satisfies~\eqref{eqparaboliclin}, the strong maximum
principle~\cite[Theorem 2.1.1]{mantegazza}
applies to show that the function
$t\in (t_0-\delta,t_0+\delta)\mapsto {\rm max}_\cS u_t$
(which is continuous, since $\lim_{x\to \partial \cS} \omega_t(x) = 0$,
and locally Lipschitz)
is nonincreasing. Thus, if
$\Upsilon_{t_0} = \emptyset$ for some $t_0>0$, then $\Upsilon_{t} =
\emptyset$ for all $t\geq t_0$.

Next, we will show that $\Upsilon_{t_0} = \es$ for some $t_0 >0$.
Arguing by contradiction, we
assume that $\Upsilon_t\neq \emptyset$ for all $t>0$,
so the function ${\rm max}(\omega_t)$ is nonincreasing with
$t$ and satisfies ${\rm max}(\omega_t)\geq \epsilon$.
For $n\in\N$, let $\omega^n\colon \cS\times[-n,\infty)\to \R$
be defined by $\omega^n(x,t) = \omega(x,t+n)$.
Then $\{\omega^n\}_{n\in\N}$ is uniformly bounded and, after passing
to a subsequence, it converges
smoothly to
some function $\gamma$ that also satisfies
\begin{align}\label{eqlimitgamma}
\abs{\gamma(x,t)}<\epsilon,\quad x\in \cS - \Omega_{R}\text{ and }
t\geq0.
\end{align}

We claim that the graph of the function $\phi = \gamma+v$
evolves via MCF. To see this,
let $\phi^n = \omega^n + v$. Then, for $x\in \cS$ and $t\geq 0$,
\begin{align*}
\phi^n(x,t) & = u(x,t+n)-v(x,t+n)+v(x,t) = u(x,t+n)-n,
\end{align*}
where the second equality follows from the fact that the graph
of $v$ is a translating soliton to MCF. In particular,
the graph of $\phi^n$ evolves by MCF and it follows
from~\eqref{eq-MCFequation02} that
\begin{align*}
\frac{d\phi^n}{dt} = \cQ(\phi^n) \quad \Longrightarrow \quad
\frac{d\phi}{dt} = \cQ(\phi),
\end{align*}
since, for a fixed $t$,
$\lim \phi^n(\cdot,t) = \phi(\cdot,t)$
in the $C^{2,\alpha}$ norm in compacts of $\cS$.
Once again, we may show that $\gamma = \phi - v$ satisfies
the hypothesis of the strong maximum principle for linear
parabolic equations. However, the defining properties of
$\gamma$ give that ${\rm sup}_{x\in \cS}\gamma(x,t)$ does not
depend on $t$, so we may apply~\cite[Corollary~2.1.2.]{mantegazza}
to obtain that $\gamma$ is in fact constant, thus $\gamma \equiv 0$
by~\eqref{eqlimitgamma}. On the other hand, the
assumption that, for all $t$, ${\rm max}(\omega_t) \geq \epsilon$,
attained in some point of the compact $\Omega_{R}$, gives that $\gamma$
cannot be identically zero, which is a contradiction
that proves that $\omega_t(x) <\epsilon$ for all $t$ sufficiently large.

The proof that
$v(x,t)-u(x,t) <\epsilon$ for all $x\in \cS$ and all
$t$ sufficiently large is analogous, after observing that $u$ is a
graphical solution to MCF if and only if $-u$ also is. This completes
the proof of the theorem.
\end{proof}

\section{Further developments}\label{secopenprob}

In this section, we extend the arguments
of~\cite[Section~3.2]{dLRS} to the context of $\hn{n+1}$, presenting
a family of parabolic invariant translators to MCF which are analogous
to the grim reapers of $\R^{n+1}$. After noticing
that this family also consists of graphical solutions
to MCF, we raise two questions concerning stability.

\subsection{Grim reapers}
We now proceed to construct in $\h^{n+1}$ the analogous of
the translators of $\h^{3}$ called \emph{grim reapers.}
Such translators are invariant by a
group of {\em parabolic} isometries of $\h^{n+1}$, that is,
those that leave invariant
families of parallel horospheres .

As in the case $n=2$, the grim reapers obtained here are
horizontal cylinders over entire graphs on $\R$ which are contained in
a vertical totally geodesic plane
of \,$\h^{n+1}$. More precisely,
such a hypersurface is parameterized by a map
$X\colon\R^n\to\h^{n+1}$ of the form
\begin{equation*}
X(x_1,\dots, x_{n-1},s)=(x_1,\dots, x_{n-1},s,\phi(s)), \,\, (x_1,\dots, x_{n-1},s)\in\R^n,
\end{equation*}
where $\phi$ is a smooth positive function on $\R.$
We call $\Sigma:=X(\R^n)$ the \emph{parabolic cylinder determined by}
$\phi.$

Reasoning as in Section~\ref{sec-translators} (see also the proof
of~\cite[Lemmas 3.18 and 3.19]{dLRS}), one obtains the
following result.

\begin{lemma} \label{lem-parabolicODE01}
A parabolic cylinder determined by a positive smooth function $\phi$ is
a translator to {\rm MCF} in \,$\h^{n+1}$ if and only
if {$\phi=\phi(s)$} is a solution to
the second order {\rm ODE:}
\begin{equation} \label{eq-EDOparabolic}
\phi''=-\phi'(1+(\phi')^2)\frac{ns}{\phi^2}\cdot
\end{equation}
Moreover, any solution $\phi$ is defined for all $s\in \R$
and, assuming
$\lambda = \phi'(0)\geq 0$, it satisfies (see Figure~\ref{figGR}):
\begin{enumerate}[label={\rm (\roman*)}]
\item $\phi$ is constant if $\lambda = 0$;
\item \label{item2ofthelemma} $\phi$ is increasing, convex in $(-\infty,0)$ and
concave in $(0,+\infty)$ if $\lambda > 0$;
\item There exist $\lambda^-\in (0,\phi(0))$ and $\lambda^+\in (\phi(0),+\infty)$ such that
\begin{align*}
\lim_{s\to - \infty}\phi(s)=\lambda^-, \quad
\lim_{s\to + \infty}\phi(s)=\lambda^+.
\end{align*}
\end{enumerate}
\end{lemma}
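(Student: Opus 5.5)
We first derive the ODE. Then we do a qualitative analysis of its solutions in terms of $w:=\phi'$.

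\emph{Derivation of \eqref{eq-EDOparabolic}.} For the parabolic cylinder $X$, the Euclidean unit normal is $\overbar N=\rho(0,\dots,0,-\phi',1)$ with $\rho=(1+(\phi')^2)^{-1/2}$. Since the hypersurface is a cylinder over a plane curve, the only nonzero Euclidean principal curvature is the curvature of the profile. Hence $\overbar H=\rho^3\phi''/n$. By \eqref{eq-MCrelation}, $H=\phi\overbar H+\overbar N_{n+1}=\rho(\phi\rho^2\phi''/n+1)$. A direct computation as for \eqref{eq-Xeta} gives $\langle X,N\rangle=\frac{\rho}{\phi}(\phi-s\phi')$; the coordinates $x_1,\dots,x_{n-1}$ do not contribute because $\overbar N$ has no component in those directions. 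Setting $H=\langle X,N\rangle$ in \eqref{eq-translatorH301} and simplifying gives $\phi\rho^2\phi''/n=-s\phi'/\phi$, which is exactly \eqref{eq-EDOparabolic}.

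\emph{Qualitative behavior.} The right-hand side of \eqref{eq-EDOparabolic} is smooth wherever $\phi>0$, and constant functions are solutions. By uniqueness for the initial value problem, $\lambda=0$ forces $\phi$ to be constant, which is (i). If $\lambda>0$, the same uniqueness argument shows that $w=\phi'$ never vanishes, so $w>0$ on the maximal interval of existence. Then \eqref{eq-EDOparabolic} gives $\operatorname{sign}\phi''=-\operatorname{sign}s$, which is \ref{item2ofthelemma}. It follows that $w$ attains its maximum $\lambda$ at $s=0$, so $0<w\le\lambda$ throughout.

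To get existence for all $s\in\R$, it suffices to show that $\phi$ stays bounded away from $0$ and from $+\infty$ on bounded intervals. The upper bound is immediate from $\phi\le\phi(0)+\lambda|s|$, so the only danger is $\phi\to0$ as $s$ decreases. On $s<s_1<0$ we view $w$ as a function of $\phi$, which is legitimate because $\phi$ is strictly monotone. This gives
\[
\frac{dw}{d\phi}=\frac{n|s|(1+w^2)}{\phi^2}\ge\frac{n|s_1|}{\phi^2}.
\]
Integrating from $\phi_1=\phi(s_1)$ downward yields $w\le w(s_1)-n|s_1|\left(\frac1\phi-\frac1{\phi_1}\right)$. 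The right-hand side becomes negative before $\phi$ reaches $0$, contradicting $w>0$. So $\phi$ is bounded below by a positive constant on $(-\infty,0]$. This yields global existence, and by monotonicity the limit $\lambda^-=\lim_{s\to-\infty}\phi(s)$ exists, is positive, and satisfies $\lambda^-<\phi(0)$ since $\phi$ is strictly increasing.

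\emph{Finiteness of $\lambda^+$ (main obstacle).} We must show that $\int_0^\infty w\,ds<\infty$. I would use the identity
\[
\frac{d}{ds}\log\frac{w}{\sqrt{1+w^2}}=-\frac{ns}{\phi^2},
\]
and bootstrap. First, the bound $\phi\le c+\lambda s$ gives $\int_1^s \frac{n\sigma}{\phi^2}\,d\sigma\ge \beta\log s-C$ with $\beta=n/\lambda^2>0$. Hence $w\le Cs^{-\beta}$. If $\beta>1$, this is integrable and we are done. Otherwise $\phi\le C(1+s^{1-\beta})$, or $\phi\le C(1+\log s)$ when $\beta=1$. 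Feeding this back into the identity, the integrand $ns/\phi^2$ is now at least $cs^{2\beta-1}$ up to logarithms, whose integral grows like a positive power of $s$. This forces $w$ to decay faster than any power of $s$, so $w$ is integrable. Therefore $\lambda^+=\lim_{s\to+\infty}\phi(s)<\infty$, and $\lambda^+>\phi(0)$ by strict monotonicity.
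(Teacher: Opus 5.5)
Your proposal is correct. The derivation of \eqref{eq-EDOparabolic} is the computation the paper intends when it says ``reasoning as in Section~\ref{sec-translators}''. It is the proof of Lemma~\ref{lem-rotationalODE01} with the rotational term $(n-1)\phi'/s$ removed, and your expressions for $\overbar H$, $H$ and $\langle X,N\rangle$ are right.

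For the qualitative statements the paper gives no argument of its own and only refers to \cite{dLRS}, so what you wrote is a self-contained substitute rather than a reproduction. The following steps are all sound:
\begin{itemize}
\item uniqueness, which gives (i) and $w\neq 0$;
\item the sign of $\phi''$, which gives (ii) and $0<w\le\lambda$;
\item the integration of $dw/d\phi$, which keeps $\phi$ away from $0$ as $s\to-\infty$;
\item for $\lambda^+<\infty$, the identity $\frac{d}{ds}\log\frac{w}{\sqrt{1+w^2}}=-\frac{ns}{\phi^2}$ followed by the bootstrap: first $w\lesssim s^{-n/\lambda^2}$, then super-polynomial decay when $n/\lambda^2\le 1$.
\end{itemize}
The paper's appeal to the $n=2$ case is legitimate because the ODE depends on $n$ only through a positive constant. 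Your argument makes that dimension-independence explicit: $n$ enters only through the exponent $n/\lambda^2$.

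Two small points are worth stating explicitly.
\begin{itemize}
\item The case $\lambda<0$, which is needed for ``any solution is defined for all $s$'', follows from the invariance of the ODE under $s\mapsto -s$.
\item ``Any solution'' must be read, as you implicitly did, as a solution whose domain contains $s=0$. For $s>0$ one has $\frac{d}{ds}\bigl(w^{-2}\bigr)\ge 2ns/\phi^2$. Hence a solution prescribed at some $s_0>0$ with $\phi'(s_0)$ large cannot be continued back to $s=0$. Your global existence argument genuinely uses that $w$ attains its maximum at $s=0$.
\end{itemize}
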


\begin{figure}
\centering
\includegraphics[width=\textwidth]{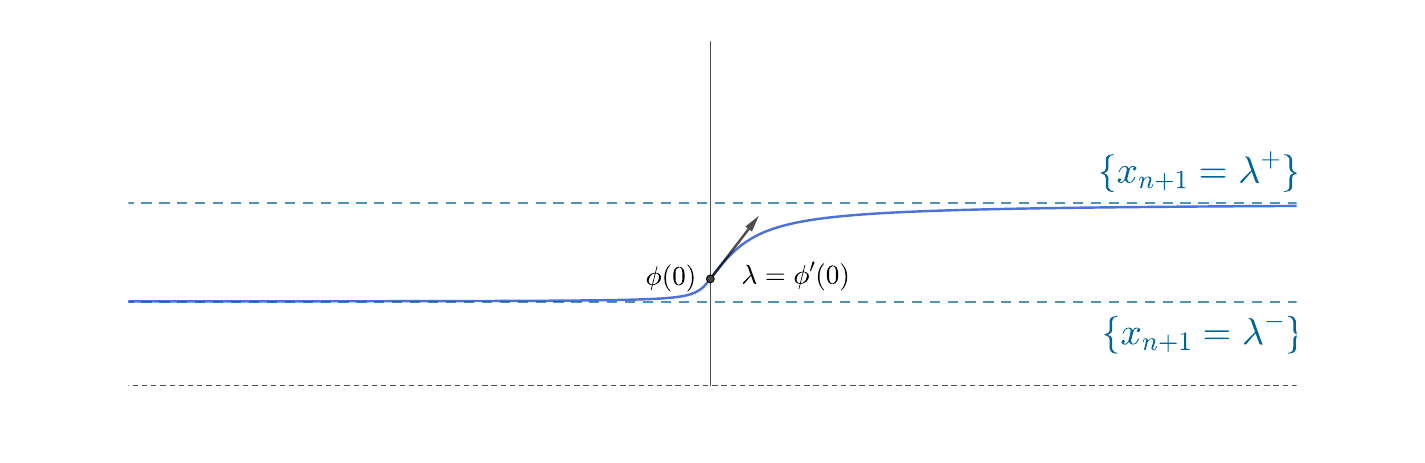}
\caption{\small\label{figGR} The profile curve of a solution
$\phi$ to~\eqref{eq-EDOparabolic} with $\lambda = \phi'(0) >0$.}
\end{figure}

It follows from Lemma~\ref{lem-parabolicODE01} that we can mimic the proof
of~\cite[Theorem 3.20]{dLRS} to obtain the following result.

\begin{theorem} \label{th-grimreapers}
There exists a one-parameter family
\begin{equation*}
\mathscr G:=\{\Sigma_{\lambda}\mid \lambda\in[0,+\infty)\}
\end{equation*}
of noncongruent complete translators (to be called \emph{grim reapers})
which are horizontal parabolic cylinders
generated by the solutions of~\eqref{eq-EDOparabolic}.
$\Sigma_0$ is the horosphere $\mathscr H$ at height one, and
for $\lambda>0,$ each $\Sigma_{\lambda}\in\mathscr G$ is an entire
vertical graph over $\mathscr H$ which is
contained in a slab determined by two
horospheres $\mathscr H_{\lambda^-}$ and $\mathscr H_{\lambda^+}.$
Furthermore, there exist open sets $\Sigma_{\lambda}^-$ and $\Sigma_{\lambda}^+$ of $\Sigma_\lambda$
such that $\Sigma_{\lambda}^-$ is
asymptotic to $\mathscr H_{\lambda^-}$, $\Sigma_{\lambda}^+$ is asymptotic to $\mathscr H_{\lambda^+},$
and $\Sigma_\lambda={\rm closure}\,(\Sigma_{\lambda}^-)\cup{\rm closure}\,(\Sigma_{\lambda}^+).$
\end{theorem}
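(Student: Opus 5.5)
The plan follows the proof of~\cite[Theorem 3.20]{dLRS}, using Lemma~\ref{lem-parabolicODE01} as the only analytic input. For each $\lambda\ge0$, let $\phi_\lambda$ be the solution of~\eqref{eq-EDOparabolic} with $\phi_\lambda(0)=1$ and $\phi_\lambda'(0)=\lambda$, and set $\Sigma_\lambda:=X(\R^n)$, the parabolic cylinder determined by $\phi_\lambda$. By the first part of Lemma~\ref{lem-parabolicODE01}, $\Sigma_\lambda$ is a translator, and $\phi_\lambda$ is defined on all of $\R$. Hence $\Sigma_\lambda$ is an entire vertical graph over $\mathscr H=\{x_{n+1}=1\}$, and therefore a properly embedded, complete hypersurface. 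For $\lambda=0$, item (i) gives $\phi_0\equiv1$, so $\Sigma_0=\mathscr H$. For $\lambda>0$, item (iii) gives $\lambda^-<\phi_\lambda<\lambda^+$, with the strict inequalities coming from monotonicity in item (ii). This places $\Sigma_\lambda$ in the slab between $\mathscr H_{\lambda^-}$ and $\mathscr H_{\lambda^+}$.

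Next we set $\Sigma_\lambda^-:=X(\{s<0\})$ and $\Sigma_\lambda^+:=X(\{s>0\})$. These are open subsets of $\Sigma_\lambda$, and the union of their closures is $\Sigma_\lambda$. As $s\to-\infty$ we have $\phi_\lambda(s)\to\lambda^-$, so $\Sigma_\lambda^-$ is asymptotic to $\mathscr H_{\lambda^-}$; the same holds for $\Sigma_\lambda^+$ and $\mathscr H_{\lambda^+}$ as $s\to+\infty$. To make this asymptoticity precise in the $C^1$ sense, I would note that $\phi_\lambda'\to0$ at both ends. This follows because $\phi_\lambda'$ is positive and integrable, and because equation~\eqref{eq-EDOparabolic} bounds $\phi_\lambda''$ once $\phi_\lambda$ and $\phi_\lambda'$ are bounded. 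The required bounds are $\phi_\lambda\ge\lambda^->0$, and $\phi_\lambda'\le\lambda$, since $\phi_\lambda'$ has its maximum at $0$ by the convexity/concavity in item (ii).

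The main obstacle I expect is \emph{noncongruence}. An isometry taking $\Sigma_\lambda$ to $\Sigma_\mu$ must preserve how each surface sits relative to its asymptotic horospheres. Both ends of each surface are asymptotic to horospheres centered at $\infty$, so the isometry fixes the point $\infty$. It is therefore a Euclidean similarity $x\mapsto a\,Ax+b$ with $a>0$, $A$ orthogonal, and $b$ horizontal. Such a map sends $\mathscr H_{\lambda^\pm}$ to $\mathscr H_{a\lambda^\pm}$, so the ratio $\lambda^+/\lambda^-$ is a congruence invariant. I would then use this ratio to tell the surfaces apart: show that $\lambda\mapsto\lambda^+(\lambda)/\lambda^-(\lambda)$ is strictly increasing, or at least injective.

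For monotonicity, the approach I would try first is a comparison argument for~\eqref{eq-EDOparabolic}. Rewrite it as $(\arctan\phi')'=-ns\,\phi'/\phi^2$, and compare two solutions with the same initial height and different slopes $\lambda<\mu$. On $s>0$ the goal is that $\phi_\mu>\phi_\lambda$ for all $s$, and that $\phi_\mu'-\phi_\lambda'$ cannot become negative in a way that lets the curves cross. This would give $\lambda^+(\mu)\ge\lambda^+(\lambda)$, and by the symmetry $s\mapsto -s$, $\phi\mapsto 1/\phi$-type reasoning, or simply the same argument on $s<0$, also $\lambda^-(\mu)\le\lambda^-(\lambda)$, which yields monotonicity of the ratio. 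Strictness should follow from the strong uniqueness of ODE solutions. If this comparison proves delicate, the fallback is to copy the corresponding argument in~\cite{dLRS}, which does not depend on $n$ because $n$ enters only as a positive constant factor.
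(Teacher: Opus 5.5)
\paragraph{Structural part.} Your construction of $\Sigma_\lambda$ matches the paper, which simply combines Lemma~\ref{lem-parabolicODE01} with the proof of \cite[Theorem 3.20]{dLRS}. This covers the translator property, entireness, completeness, the slab, and the pieces $\Sigma_\lambda^\pm$. One justification is wrong but harmless. Because of the factor $s$, \eqref{eq-EDOparabolic} does \emph{not} bound $\phi_\lambda''$ in terms of bounds on $\phi_\lambda,\phi_\lambda'$. You do not need it: $\phi_\lambda'$ is positive, integrable, and monotone on each half-line by item (ii), so it tends to $0$.

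\paragraph{Gap: noncongruence.} Your reduction to similarities $x\mapsto aAx+b$ and the invariance of $\lambda^+/\lambda^-$ are correct. However, you never establish that $\lambda\mapsto\lambda^+/\lambda^-$ is injective, for two reasons.

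First, the non-strict inequalities $\lambda^+(\lambda)\le\lambda^+(\mu)$ and $\lambda^-(\lambda)\ge\lambda^-(\mu)$ do not give injectivity. Uniqueness for the initial value problem says nothing about whether two distinct solutions can share their limits at $\pm\infty$.

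Second, on $s<0$ the comparison is not ``the same argument.'' Suppose $w=\phi_\lambda-\phi_\mu$ has a positive interior maximum at $s_1<0$, where $\phi_\lambda'=\phi_\mu'=q>0$. The equation then gives $w''(s_1)=-nq(1+q^2)s_1(\phi_\lambda^{-2}-\phi_\mu^{-2})<0$, which is no contradiction. Also, $\phi\mapsto 1/\phi$ is not a symmetry of \eqref{eq-EDOparabolic}.

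\paragraph{Repair of the ratio argument.} Work with the inverse function $s=s(\phi)$, which satisfies $s''=ns(1+(s')^2)/\phi^2$. At a positive interior maximum of a difference $w$ of two solutions, $w''=n(1+(s')^2)w/\phi^2>0$. So the comparison holds on both sides of $\phi=1$. For strictness, integrate $(\arctan\phi')'=-ns\phi'/\phi^2$ over $(0,\infty)$ to get $\int_1^{\lambda^+}s(\phi)\phi^{-2}\,d\phi=\arctan(\lambda)/n$. Since $s_\lambda>s_\mu$ on $(1,\lambda^+(\lambda))$ for $\lambda<\mu$, this rules out $\lambda^+(\lambda)=\lambda^+(\mu)$.

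\paragraph{A shorter route avoiding the ratio.}
\begin{enumerate}[label=(\arabic*)]
\item The similarity $g$ conjugates translations in the $x_1,\dots,x_{n-1}$ directions, which preserve $\Sigma_\lambda$, into horizontal translations preserving $\Sigma_\mu$.
\item For $\mu>0$, strict monotonicity of $\phi_\mu$ means only translations with zero $x_n$-component preserve $\Sigma_\mu$. Hence $A$ sends $e_n$ to $\pm e_n$, and $\phi_\mu(\pm as+c)=a\phi_\lambda(s)$.
\item Equation \eqref{eq-EDOparabolic} is invariant under $(s,\phi)\mapsto(\pm as,a\phi)$. A shift $\psi(\cdot-c)$ of a solution $\psi$ solves it only if $c\,\psi'\equiv0$, so $c=0$.
\item Then $a=1$ from $\phi(0)=1$, and $\mu=\lambda$ by comparing $\phi'(0)$.
\end{enumerate}
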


\begin{remark}\label{rem-soliton}
Each element $\S_\lambda$ in the family $\mathscr G$
provided by Theorem~\ref{th-grimreapers} defines a graphical
solution to MCF. Indeed, if $\lambda = 0$, the family of translating
horospheres can be explicitly parameterized by the function
$u(x,t) = t - \log(x_{n+1})$. When $\lambda >0$, one just needs
to verify that the initial condition $\Sigma_\lambda$ is
graphical in the sense of~\eqref{eq-map}, since hyperbolic translations
do not change this property. Although
the expression of the graphing function of $\S_\lambda$
is not explicit, item~\ref{item2ofthelemma} of
Lemma~\ref{lem-parabolicODE01} implies that
any half-line in $\R^{n+1}_+$ with its endpoint
at the origin meets $\S_\lambda$ exactly once.
\end{remark}

\subsection{Open problems.}
The two main steps in the proof of Theorem~\ref{thm-stability}
are the uniform convergence in space of the functions $\omega_t$,
and the convergence in time of the family $\{\omega_t\}$. We note
that the proof of the time convergence, which is the statement of
the theorem, is independent of the initial
translating soliton considered, as long as it is a graphical solution
that satisfies the conclusion of Claim~\ref{cl:convergenceinspace}.
Since each member $\S_\lambda$ of the family $\mathscr G$ presented
in Theorem~\ref{th-grimreapers} is a translator and also
a graphical solution to MCF, for which $\Sigma_0$ is dynamically stable,
we pose the following question:
\begin{center}
{\em Are grim reapers
in $\hn{n+1}$ dynamically stable graphical solutions
to} MCF?
\end{center}

This question remains open in the Euclidean
setting as well. Indeed, it is unknown whether
the grim reaper in $\R^{n+1}$
is stable or not as a graphical translator to MCF. Thus,
this raises the question:
\begin{center}
{\em Is a grim reaper in $\R^{n+1}$ a
dynamically stable graphical solution to} MCF?
\end{center}

Finally, we ask whether the Lipschitz continuity assumption
in Theorem~\ref{thm-unterberger} can be relaxed to continuity,
as in~\cite{schulze,EH}.

\end{document}